\newcommand{\ti}{\;\,\makebox[0pt]{$\mid$}\makebox[0pt]{$\cap$}\;\,} 
\begin{document}

\newtheorem{thm}{Theorem}
\newtheorem{cor}{Corollary}
\newtheorem{prop}{Proposition}
\newtheorem{lem}{Lemma}
\newtheorem{rem}{Remark}
\newtheorem{claim}{Claim}

\title{An example of  $C^1$-generically wild homoclinic classes with index deficiency}

\author{Katsutoshi Shinohara}

\maketitle

\begin{abstract}
Given a closed smooth four-dimensional manifold,
we construct a diffeomorphism that has a homoclinic class
whose continuation locally generically satisfies the following condition:
it does not admit any kind of dominated splittings
whereas any periodic points belonging to 
it never have unstable index one.
\end{abstract}

\section{Introduction}

\subsection{Preliminaries}
\subsubsection{General notations}
We consider a  closed (compact and boundaryless) smooth manifold $M$ with a Riemannian metric. 
For $A \subset M$, we denote its topological interior as $\mathrm{int}(A)$ and 
topological closure as $\overline{A}$.
We denote the group of $C^1$-diffeomorphisms of $M$ as $\mathrm{Diff}^1(M)$.
We fix a distance function on $\mathrm{Diff}^1(M)$ derives from the Riemannian metric and
furnish $\mathrm{Diff}^1(M)$ with the $C^1$-topology defined by the distance function.

By $TM$ we denote the tangent bundle of $M$.
By $\Lambda^k(TM)$ we denote the exterior product of $TM$ with degree $k$.
We furnish this bundle with the metric canonically induced from the Riemannian
metric on $M$.
For $f \in \mathrm{Diff}^1(M)$, by $\Lambda^k(df)$ we denote the bundle map
of $\Lambda^k(TM)$ canonically induced from $df$.
The support of $f$ is defined to be the closure of the set $\{ x \in M \mid f(x) \neq x\}$.
Given two submanifold $N_1, N_2 \subset M$, 
by $N_1 \ti N_2$ we denote the set of points in $N_1 \cap N_2$
at which $N_1$ and $N_2$ intersect transversally.

For $f : V \to W$, where $V$, $W$ are some finite dimensional Euclidean space and $f$ is a linear map, we define the value 
$m(f)$ to be the minimum of $\| f(v) \|$, where $v$ ranges all the unit vector in $V$.
Note that $f$ is injective if and only if $m(f) > 0$. When $W=V$ and $m(f) > 1$ then 
every absolute value of the eigenvalue of $f$
is greater than one.

\subsubsection{General notations related to dynamical systems}
Let $P$ be a hyperbolic periodic point of $f$.
The index of a hyperbolic periodic point $P$ (denoted as $\mathrm{ind}(P)$)
is defined to be the dimension of the unstable manifold of $P$.
By $W^s(P, f)$ (resp. $W^u(P, f)$) we denote the stable (resp. unstable) manifold of $P$. 
We often use the simplified notation $W^s(P)$ (resp. $W^u(P)$). 
By $H(P, f)$, or simply $H(P)$, we denote the homoclinic class of $P$, i.e., the closure of the set of 
points of transversal intersections of $W^u(P)$ and $W^s(P)$,
more precisely, $H(P) := \overline{W^s(P) \ti W^u(P)}$.
For a homoclinic class $H(P)$, its index set, denoted as $\mathrm{ind}(H(P))$,
is the collection of integers that appears as the 
index of some periodic point in $H(P)$.

If $g$ is a $C^1$-diffeomorphism sufficiently close to $f$,
then one can define the continuation of $P$.
We denote the continuation of $P$ for $g$ as $P(g)$.
We sometimes use the notation like $W^s(P, g)$ in the sense of $W^s(P(g), g)$.

\subsubsection{Dominated splittings}
Let $\Sigma$ be an $f$-invariant subset of $M$.
We say that $\Sigma$ admit $l$-dominated splitting if there exist two $df$-invariant 
subbundle $F$, $G$ of $TM|_{\Sigma}$ such that $TM|_{\Sigma} = F \oplus G$
and for all $x \in \Sigma$ following holds:
\[
\| df^l(x)|_{F} \|  \|(df)^{-l}(f^{l}(x))|_{G}\| < 1/2,
\]
where $\| \, \cdot \, \|$ denotes the operator norm derives from the
Riemannian metric.
We say $\Sigma$ does not admit dominated splitting if $\Sigma$ does not admit $n$-dominated
splitting for all $n>0$.

A homoclinic class $H(p, f)$ is said to be wild if there exists a $C^1$-neighborhood $\mathcal{U}$ of $f$
such that for all $g \in \mathcal{U}$, the continuation $H(P, g)$ of $H(P,f)$ does not admit dominated 
splitting. It is said to be generically wild if there exists 
a neighborhood $\mathcal{U}$ of $f$ and residual subset $\mathcal{R}$ of $\mathcal{U}$ such that 
for every $g \in \mathcal{R}$ the continuation $H(P, g)$ of $H(P,f)$ does not admit dominated 
splitting.

\subsubsection{Heterodimensional cycles}
Let $f \in \mathrm{Diff}^1(M)$ and $\Gamma$ and $\Sigma$ be two transitive hyperbolic
invariant sets for $f$.  We say $f$ has a heterodimensional cycle
associated to $\Gamma$ and $\Sigma$ if the following holds:
\begin{enumerate}
\item The indices (the dimension of the unstable manifolds) of the sets $\Gamma$ and $\Sigma$ are different.
\item The stable manifold of $\Gamma$ meets the unstable manifold of $\Sigma$ and 
the same holds for stable manifold of $\Sigma$ and the unstable manifold of $\Gamma$.
\end{enumerate}
We say the heterodimensional cycle between $\Gamma$ and $\Sigma$ is $C^1$-robust
if there exists a $C^1$-neighborhood $\mathcal{U}$ of $f$ such that for each $g \in \mathcal{U}$
the continuations $\Gamma(g)$ of $\Gamma$ and $\Sigma(g)$ of $\Sigma$
have a heterodimensional cycle.

\subsection{Background}
In the sphere of non-hyperbolic dynamics, 
one can find homoclinic classes that are robustly non-hyperbolic.
Some of them exhibit weak form of hyperbolicity, for example, 
partial hyperbolicity or dominated splitting,
while there exist homoclinic classes that robustly 
fail to have any kind of dominated splittings.
Such homoclinic classes are called wild.

It is an intriguing subject to study the effects and mechanisms of the 
absence of dominated splittings on homoclinic classes.
The study of wild homoclinic classes is commenced by Bonatti and D\'{i}az in \cite{BD}. 
They showed, under some condition on Jacobian, 
the robust absence of dominated splittings on a homoclinic class
implies the creation of complicated dynamics called universal dynamics.
In \cite{BDP}, Bonatti and D\'{i}az and Pujals proved,
$C^1$-generically, absence of dominated splittings implies the 
$C^1$-Newhouse phenomenon, i.e., locally generic coexistence of 
infinitely many sinks or sources.
These results tells us that the study of wild homoclinic classes are fertile.

In \cite{S}, inspired by \cite{ABCDW}, 
aiming to the understanding of the mechanisms of the wilderness,
the problem of the index sets of wild homoclinic classes is treated. 
Let us review the problem. For a homoclinic class $H(p)$,
its index set, denoted as $\mathrm{ind}(H(p))$, is defined to be the
collection of integers 
that appear as the index (dimension of unstable manifold) of
some hyperbolic periodic point in $H(p)$.  
In that article, it is proved that, under some assumption on Jacobian, 
$C^1$-generically, the index set of three dimensional 
wild homoclinic classes contain all possible indices, namely, $1$ and $2$.
The result sounds quite reasonable if one recalls the idea of the argument in \cite{BDP}:
The wilderness of a homoclinic class scatters its hyperbolicity to any direction.
So it is plausible one can construct periodic points with
prescribed indices by mixining the hyperbolicity with small perturbations.
 
In this article, we give an example that says this naive idea has some limitation in higher dimensionnal cases.
Let us state the precise statement of our theorem.
\begin{thm}\label{mainth}
For every four-dimensional smooth closed manifold $M$, 
there exists a diffeomorphism $f$ that satisfies the following:
There exist a hyperbolic fixed point $P$ of $f$, 
$C^1$-neighborhood $\mathcal{U}$ of $f$ 
and a residual subset $\mathcal{R}$ of $\mathcal{U}$ such that 
for every $g \in \mathcal{R}$, $H(P,g)$ does not 
admit any kind of dominated splittings and 
$\mathrm{ind}(H(P,g)) = \{2,3\}$.
\end{thm}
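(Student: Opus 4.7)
The plan is to build $f$ from three interacting ingredients: a robust heterodimensional cycle, a Jacobian-type pinching that forbids index $1$, and a local perturbation mechanism that destroys every candidate dominated splitting. The ambient manifold being four-dimensional, I would first reduce to constructing $f$ on a $4$-ball and then import it into $M$ via a chart, making $f$ the identity outside a small ball so that it extends smoothly.

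I would start with two hyperbolic fixed points, $P$ of index $3$ and $Q$ of index $2$, connected by a robust heterodimensional cycle of Bonatti--D\'iaz blender-horseshoe type. This provides, in a $C^1$-neighborhood $\mathcal{U}$ of $f$, robust intersections $W^u(P(g))\ti W^s(Q(g))$ and $W^s(P(g))\cap W^u(Q(g))$, forcing both $P(g)$ and $Q(g)$ (and hence indices $2$ and $3$) to belong to $H(P,g)$. The blender also forces a large transitive invariant set whose topology I can control. Next, I would design the derivative of $f$ on the ball so that, uniformly on a neighborhood of the candidate class,
\[
  m\bigl(\Lambda^3(df^l)(x)\bigr) > 1
\]
for some fixed $l$. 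Since this is an open $C^1$-condition, it persists on a possibly smaller $C^1$-neighborhood $\mathcal{U}$. At any hyperbolic periodic point $R \in H(P,g)$ of period $n$ with $l \mid n$, writing $|\lambda_1|\ge|\lambda_2|\ge|\lambda_3|\ge|\lambda_4|$ for the eigenvalues of $dg^n(R)$, the eigenvalues of $\Lambda^3(dg^n)(R)$ are the triple products of the $\lambda_i$'s, with minimum modulus $|\lambda_2\lambda_3\lambda_4|$. Iterating the pointwise bound gives $|\lambda_2\lambda_3\lambda_4|>1$, which rules out index $\le 1$ (and index $4$ is impossible in a nontrivial homoclinic class). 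Hence $\mathrm{ind}(H(P,g))\subset\{2,3\}$ for every $g\in\mathcal{U}$.

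The third, and hardest, ingredient is to arrange the robust absence of any dominated splitting. I would spread many saddles through the blender whose derivatives carry pairs of complex (non-real) eigenvalues of carefully chosen moduli, in the style of Bonatti--D\'iaz--Pujals: each such saddle rules out, by a norm comparison, the possibility of a dominated splitting of one of the types $1{+}3$, $2{+}2$, or $3{+}1$. A finite repertoire of such periodic orbits, accumulated densely in $H(P)$, rules out every continuous $df$-invariant splitting. The key compatibility check is that all these eigenvalue configurations can coexist with $m(\Lambda^3(df^l))>1$: since that constraint concerns only the product of the three smallest eigenvalue moduli, there is room to oscillate individual moduli (including through $1$) as needed to defeat domination, while keeping the triple product bounded below.

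Finally, genericity and hyperbolicity close out the statement. Using standard $C^1$-generic machinery --- Kupka--Smale, Hayashi's connecting lemma, and Bonatti--Crovisier results ensuring that periodic points are dense in the homoclinic class and that continuations of robust cycles persist --- one obtains a residual $\mathcal{R}\subset\mathcal{U}$ on which $H(P,g)$ contains dense hyperbolic periodic points of both indices $2$ and $3$, so $\mathrm{ind}(H(P,g))=\{2,3\}$, while the dominated-splitting obstructions above force the absence of any dominated splitting. The principal obstacle is the third step: producing the concrete blender-type local model whose saddles simultaneously obstruct every candidate splitting and respect the $\Lambda^3$-expansion that secures index deficiency.
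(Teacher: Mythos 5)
Your overall plan shares the paper's skeleton (a local model on a $4$-ball imported into $M$, two fixed points $P$ of index~$3$ and $Q$ of index~$2$ linked by a heterodimensional cycle, a uniform $\Lambda^3$-expansion to kill index~$1$, and $C^1$-generic machinery to fuse the chain-recurrence class with the homoclinic class), but you diverge on two substantive points, and one of them hides a real gap.

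First, the gap: you assert that $m(\Lambda^3(df^l))>1$ ``uniformly on a neighborhood of the candidate class'' and that, being an open $C^1$-condition, it persists under perturbation. This persistence is on a \emph{fixed} set, whereas the object you must control is $H(P,g)$, which a priori may wander off that set as $g$ varies. Without an a priori trapping mechanism, a perturbed class could pick up periodic orbits whose itineraries pass through regions where the $\Lambda^3$-expansion fails, and the index bound would evaporate. The paper forecloses this by imposing two nested compact trapping regions $B\subset A$ with $f(A)\subset\mathrm{int}(A)$, $f(B)\subset\mathrm{int}(B)$, and $P,Q\in C:=A\setminus B$; one then proves directly (from $W^u(P)\subset A$, $W^s(P)\cap B=\emptyset$, and compactness) that $H(P,g)\subset C$ for all nearby $g$, and the $\Lambda^3$-expansion is arranged to hold on all of $C$. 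You need something of this sort to make the containment robust; simply invoking openness of the pointwise inequality does not suffice.

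Second, a methodological difference rather than an error: to obstruct every dominated splitting you propose, in the Bonatti--D\'{i}az--Pujals style, to scatter many periodic saddles with carefully tuned complex eigenvalue pairs densely through the class. The paper achieves the same effect with a far leaner device: it prescribes the eigenvalue structure of $P$ and $Q$ once and for all. With $P$ of index~$3$ whose two middle eigenvalues form a complex pair, any $2+2$ splitting is impossible at $P$; with $Q$ of index~$2$ whose four eigenvalues are all non-real, any $1+3$ or $3+1$ splitting is impossible at $Q$. Once a $C^1$-generic argument identifies $H(P,g)$ with $H(Q,g)$ (via the chain-recurrence class, using Bonatti--Crovisier), these two fixed points alone kill every candidate splitting, with no need to manufacture an infinite family of auxiliary saddles. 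This also sidesteps the compatibility worry you flagged between oscillating eigenvalue moduli and preserving the triple-product bound, since nothing in the class is being perturbed for the domination argument. Relatedly, the paper does not rely on a blender for the robust cycle but on a result of Bonatti--D\'{i}az--Kiriki--Soma: a heterodimensional cycle in which $Q$ has a complex contracting eigenvalue can be made $C^1$-robust by a small perturbation. That complex eigenvalue at $Q$ is already forced by the domination argument, so the single choice of eigenvalue data for $Q$ serves both purposes. Your blender route could presumably be made to work, but it carries more construction burden and is not what you get for free from the eigenvalue constraints already in play.
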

This theorem says that
a wild homoclinic class may have index deficiency.
More precisely, it is not always true that one can construct a saddle 
with any prescribed index, even in the sense of $C^1$-generic viewpoint. 

Let us introduce the idea of the proof.
The idea of assuring the wilderness of the homolcinic class is nothing new.
One can find a similar idea in \cite{BD}.
The novelity of our argument is the way to assure the non-existence of periodic points whose indices are not equal to $1$.
The naive idea is simple: If the diffeomorphism strongly expands some two dimensional
subspace of the tangent space at each point inside the homoclinic class,
then each three-dimensional subspace must be volume-expansive.
In particular, no periodic points inside the homoclinic class has index $1$.

Let us discuss the conditions we work on.
We work on the category of the $C^1$-diffeomorphisms of four-dimensional manifolds.
This environment is choosen only because this is enough for our purpose 
to present a example of a wild homoclinic class with index deficiency.
It is natural to wonder what happens in higher regularity and higher dimensional manifolds.
However, the complete treatise that covers general cases requires 
complicated descriptions and most of them do not seems to be essential. 
So we give up treating general cases and concentrate on this special environment. 

We want to discuss one technical issue.
Theorem \ref{mainth} says that we can construct an example of generically wild homoclinic class.
In the article such as \cite{BD}, \cite{S}, they did not 
treat generically wild homoclinic classes but wild homoclinic classes.
The difference is not so serious for the following reason:
We can re-prove many results that holds for wild homoclinic classes
starting from generically wild homoclinic classes.
For example, we can get generically wild homoclinic classes version of
the result in \cite{S} with little modification.

Yet the following question might be interesting in itself:
\begin{flushleft}
 {\bf Question.} \, Are  generically wild homoclinic classes wild?
\end{flushleft} 
For instance, it is an interesting question whether our example is wild or not. 
We would like to argue this problem in another time.

Finally, we explain the organization of this article.
In section 2, we deduce our problem to a local problem
and we describe the abstruct conditions that are sufficient
to guarantee the properties we claimed in theorem \ref{mainth}.
Section 3 is the hard part of this article. 
We construct a diffeomorphim that satisfies the 
condition listed in section 2.

\section{Sufficient condition for the theorem}

In this section, we give abstract conditions  
that assure our theorem and the proof of theorem \ref{mainth} assuming
the existence of the diffeomorphism that satisfies such conditions.

The following proposition states the sufficient conditions for theorem \ref{mainth}.
\begin{prop}\label{global}
Let $M$ be a four-dimensional smooth Riemannian manifold
and $f \in \mathrm{Diff}^1(M)$.
Suppose that $f$ satisfies all the condition below:
\begin{enumerate}
\def\labelenumi{($W$\theenumi)}
\item There are two compact sets  $A$, $B$ in $M$ such that $B \subset A$,
$f(A) \subset \mathrm{int}(A)$ and  $f(B) \subset \mathrm{int}(B)$.
\item There exist two hyperbolic fixed points $P$, $Q$ of $f$ in $C :=A \setminus B$.
\item $f$ has a heterodimensional cycle associated to $P$ and $Q$.
\item $\mathrm{ind}(P) = 3$ and let $\sigma(P), \mu_1(P), \mu_2(P), \mu_3(P)$ be the 
eigenvalues of $df(P)$ in non-decreasing order of their absolue values. 
Then $\mu_1, \mu_2$ are in $\mathbb{C} \setminus \mathbb{R}$.
\item $\mathrm{ind}(Q) = 2$ and every eigenvalue of $df(Q)$ is in $\mathbb{C} \setminus \mathbb{R}$.
\item There exists a constant $K>1$ such that $m\left( \Lambda^3(df) \right) >K$
on $C$.
In other words, $df(x)$ expands every three dimensional subspace of $TM|_x$
for all  $x \in C$ with the rate $K$.
\end{enumerate}
Then there exists a non-empty open neighborhood $\mathcal{U} \subset \mathrm{Diff}^1(M)$ of $f$ 
and a residual subset $\mathcal{R}$ of $\mathcal{U}$ such that 
for every $g \in \mathcal{R}$ the homoclinic class $H(P,g)$
does not admit dominated splittings and $\mathrm{ind}(H(P,g)) =\{ 2, 3\}$.
\end{prop}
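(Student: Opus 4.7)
The plan is to prove the proposition in three coordinated stages: confine $H(P,g)$ inside $C$ and bound its index set by $\{2,3\}$ using (W1) and (W6); realize both indices $2$ and $3$ in $H(P,g)$ generically, using the heterodimensional cycle (W3) together with the complex spectra at $P$ and $Q$ given by (W4)--(W5); and finally obstruct every dominated splitting on $H(P,g)$ by exploiting the same complex spectra.

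For the first stage I use that (W1) and (W6) are $C^1$-open, so in a small enough neighborhood $\mathcal{U}_0$ of $f$ the sets $A$ and $B$ continue to form a filtration for every $g\in\mathcal{U}_0$, the continuations $P(g),Q(g)$ remain in $C$, and $m(\Lambda^3(dg))>K'>1$ holds uniformly on $C$ for some $K'>1$. Forward-invariance of $A$ gives $H(P,g)\subset A$; a transverse homoclinic intersection $y$ of $W^u(P(g))$ and $W^s(P(g))$ satisfies $g^n(y)\to P(g)\notin B$, so $y$ cannot lie in $B$ (else its forward orbit would be trapped in $B$), and a standard bootstrap using $g(\partial B)\subset\mathrm{int}(B)$ promotes this to $H(P,g)\cap B=\emptyset$. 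Hence $H(P,g)\subset C$. For any hyperbolic periodic $R\in H(P,g)$ of period $\pi$ one has $m(\Lambda^3(dg^\pi)(R))>(K')^\pi>1$, which rules out any three-dimensional contracting subspace of $T_RM$; since sources cannot lie in non-trivial homoclinic classes, $\mathrm{ind}(H(P,g))\subseteq\{2,3\}$.

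For the second stage, $P(g)$ already realizes index $3$, so I only need to place $Q(g)$ into $H(P,g)$ generically. The heterodimensional cycle (W3) between the adjacent-index saddles $P$ and $Q$, combined with the complex pair $\mu_1,\mu_2$ at $P$ from (W4) and the entirely non-real spectrum at $Q$ from (W5), is the classical setting for a Bonatti--D\'\i az blender construction: an arbitrarily small $C^1$-perturbation produces a $C^1$-robust cycle between $P$ and $Q$ on an open set $\mathcal{V}\subset\mathcal{U}_0$, and on $\mathcal{V}$ standard connecting-lemma or $C^1$-generic chain-recurrence-class arguments give $Q(g)\in H(P,g)$ on a residual subset. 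I take $\mathcal{U}\subset\mathcal{U}_0$ to be such an open neighborhood and $\mathcal{R}\subset\mathcal{U}$ the corresponding residual, so $\mathrm{ind}(H(P,g))=\{2,3\}$ for $g\in\mathcal{R}$. For the third stage, suppose such a $g$ admitted a dominated splitting $TM|_{H(P,g)}=E\oplus F$ with $\dim E=k$: at $Q(g)$ the spectrum is two non-real conjugate pairs (one contracting, one expanding), and the splitting must respect both indecomposable pairs, forcing $k=2$; at $P(g)$ the conjugate pair $\mu_1,\mu_2=\overline{\mu_1}$ shares modulus and admits no invariant sub-splitting, leaving only $k\in\{1,3\}$. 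These are incompatible, so no dominated splitting exists on $H(P,g)$.

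The main obstacle is the second stage: the cycle guaranteed by (W3) is a priori non-robust (the leg pairing $\dim W^s(P)=1$ with $\dim W^u(Q)=2$ in the four-manifold is quasi-transverse, of codimension one), and upgrading it to a $C^1$-robust cycle on a useful open set within any neighborhood of $f$, and then promoting that robustness to the $C^1$-generic inclusion $Q(g)\in H(P,g)$, is where the complex-eigenvalue hypotheses (W4)--(W5) do the essential work via blender-type constructions. The first and third stages, by contrast, are relatively direct consequences of the open conditions and the eigenvalue-matching constraints once the middle stage is in place.
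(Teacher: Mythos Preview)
Your proposal is correct and follows essentially the same three-step route as the paper: the paper also upgrades the cycle (W3) to a robust one on an open set near $f$ via the Bonatti--D\'\i az--Kiriki--Soma theorem (its Lemma~1, which in fact uses only the complex \emph{contracting} pair at $Q$ from (W5)), intersects with the Bonatti--Crovisier residual so that $H(P,h)=H(Q,h)$, rules out dominated splittings by the eigenvalue incompatibility at $P$ and $Q$, and traps $H(P,h)$ in $C$ via the filtration to exclude index~$1$ by (W6). Your dominated-splitting paragraph actually has the roles of $P$ and $Q$ assigned correctly ($Q$ forces $\dim E=2$, $P$ forbids it), whereas the paper's text swaps them; and your explicit exclusion of sources fills a small gap the paper leaves implicit.
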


To give the proof, we quote two lemmass.
\begin{lem}[theorem 2 in \cite{BDKS}]\label{robcycle}
Consider a diffeomorphism $f$ having a heterodimensional cycle associated to 
two hyperbolic periodic points $P$ and $Q$ with $\mathrm{ind}(P)-\mathrm{ind}(Q) = \pm1$
and $\mathrm{ind}(Q)=2$. 
Suppose $df^{\mathrm{per}(Q)}(Q)$ has a complex eigenvalue
with absolute value less than one.
Then there are diffeomorphisms $g$ arbitrarily $C^1$-close to $f$
with robust heterodimensional cycles associated to transitive hyperbolic sets 
$\Gamma(g)$ and $\Sigma(g)$ containing the continuations of $P(g)$ and
$Q(g)$ of $P$ and $Q$.
\end{lem}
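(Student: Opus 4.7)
The plan is to construct the robust heterodimensional cycle by creating a blender-horseshoe $\Sigma(g)$ near the orbit of $Q$, using the complex contracting eigenvalue as the geometric engine. Since $\mathrm{ind}(Q)=2$ in a four-manifold, the local stable manifold of $Q$ is two-dimensional and the complex contracting eigenvalue acts as a rotation-contraction on this plane. First I would perform an arbitrarily $C^1$-small perturbation, supported in a small neighborhood of the orbit of $Q$, to implant a horseshoe whose dynamics in the stable direction is this rotation-contraction composed with a folding that sends a rectangle back across itself. The crucial feature is that the rotation causes the successive forward iterates of the fundamental strips to overlap and cover a ``superposition region'' in the two-dimensional stable direction without gaps; this is exactly the geometric configuration defining a blender of $s$-index two in dimension four.

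Once the blender $\Sigma(g)$ is installed, the property I would exploit is that there exists a $C^1$-open family $\mathcal{D}$ of one-dimensional disks such that every $D\in\mathcal{D}$ meets $W^s(\Sigma(g))$, and this intersection persists under arbitrary $C^1$-small perturbations both of $D$ and of the ambient diffeomorphism. I would then turn to the $P$-side. Exactly one of the two connections in the given heterodimensional cycle has the dimensional budget to be transverse in the four-manifold and thus persists robustly; the other, the fragile connection, has a one-dimensional deficit, and this is where the blender is needed. Using an inclination (or $\lambda$-) lemma at $P$ together with the existing fragile connection, I would show that a suitable iterate of a small disk in the corresponding invariant manifold of $P(g)$ falls into the open family $\mathcal{D}$. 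The blender property then promotes the fragile intersection into a robust one.

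To close the argument, the transitive hyperbolic set $\Gamma(g)$ containing $P(g)$ can be produced as a horseshoe built by folding the persistent transverse connection through the newly robust one, and transitivity of both $\Gamma(g)$ and $\Sigma(g)$ follows from their horseshoe structure. The hardest step will be the blender construction itself: I must check that the perturbation can be arranged so that the rotation-contraction at $Q$ produces strips whose images cover the superposition region with open, $C^1$-stable overlap. It is precisely here that the hypothesis of a complex (as opposed to real) contracting eigenvalue is essential, since a real contracting eigenvalue would only contract linearly along an axis and leave unavoidable gaps in the would-be superposition, whereas the rotation provided by the complex eigenvalue forces the strips to wrap around each other and close up robustly, producing the open family of intersecting disks that drives the whole argument.
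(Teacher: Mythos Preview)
The paper does not prove this lemma; it is quoted as theorem~2 of \cite{BDKS} and invoked as a black box in the proof of proposition~\ref{global}. There is therefore no in-paper argument against which to compare your proposal.

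That said, your outline is broadly aligned with the mechanism behind results of this type: robust heterodimensional cycles in the Bonatti--D\'{i}az framework are produced by blenders, and a complex contracting eigenvalue at the lower-index saddle is exactly the geometric ingredient that allows one to build the required superposition region after a small perturbation. Two points where your sketch would need sharpening before it becomes an actual proof. First, the lemma as stated does not fix the ambient dimension at four; it only assumes $\mathrm{ind}(Q)=2$ and a complex contracting eigenvalue, so $\dim W^s(Q)=\dim M-2$ may exceed two, and your construction should accommodate possible additional (strong) stable directions rather than assuming the stable plane is the entire stable manifold. Second, the blender is typically not ``implanted'' by a perturbation supported only near the orbit of $Q$; rather, it arises from the return dynamics of the full cycle. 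One unfolds the nontransverse heteroclinic connection to obtain a one-parameter family of returns near $Q$, and the rotation in the contracting complex plane of $Q$ forces the images of a transverse strip under successive returns to overlap and cover the superposition region. The hyperbolic set $\Sigma(g)$ is then built from these returns, not from a local fold inserted in isolation. With those adjustments your plan matches the standard route to the cited theorem.
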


\begin{lem}[\cite{BC}]\label{CRCisHC}
There exists a residual subset $\mathcal{R}_1$ in $\mathrm{Diff}^1(M)$ such that
following holds: For every $f \in \mathcal{R}_1$ and every chain recurrence class $C$ of $f$,
if it contains a hyperbolic periodic point $P$, then $H(P)$ coincides with $C$. 
\end{lem}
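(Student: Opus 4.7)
The plan is to turn (W1)--(W6) into an open-and-residual statement by combining Lemma~\ref{robcycle} with Lemma~\ref{CRCisHC}. Conditions (W4), (W5) say $\mathrm{ind}(P) = 3$, $\mathrm{ind}(Q) = 2$, and the two stable eigenvalues of $df(Q)$ form a complex conjugate pair of modulus less than one, so the hypotheses of Lemma~\ref{robcycle} are met. Applying it produces $g_0$ arbitrarily $C^1$-close to $f$ with a robust heterodimensional cycle between transitive hyperbolic sets $\Gamma(g_0) \ni P(g_0)$ and $\Sigma(g_0) \ni Q(g_0)$. I then take $\mathcal{U}$ to be a $C^1$-neighborhood of $g_0$ on which this robust cycle persists together with the $C^1$-open conditions (W1), (W2), (W4), (W5), and a uniform version of (W6) valid on $\bar{C}$ (using continuity of $m(\Lambda^3 dg)$ in both the base point and the diffeomorphism). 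I set $\mathcal{R} := \mathcal{U} \cap \mathcal{R}_1$ with $\mathcal{R}_1$ given by Lemma~\ref{CRCisHC}.

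For $g \in \mathcal{R}$ the robust cycle makes $P(g)$ and $Q(g)$ chain related, so Lemma~\ref{CRCisHC} forces $H(P, g) = H(Q, g)$, yielding $\{2, 3\} \subset \mathrm{ind}(H(P, g))$. To get the reverse inclusion I first check $H(P, g) \subset \bar{C}$: forward invariance of $A$ gives $W^u(P, g) \subset A$, while forward invariance of $B$ combined with $P(g) \notin B$ gives $W^s(P, g) \cap B = \emptyset$, so all transverse intersections lie in $C$. The uniform estimate $m(\Lambda^3 dg) > 1$ on $\bar{C}$ then excludes periodic points in $H(P, g)$ of index $\leq 1$: such a point would admit an invariant stable subspace of dimension at least three on which $dg^{\mathrm{per}}$ contracts, contradicting the three-volume expansion. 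Indices $0$ and $4$ are also killed by Lemma~\ref{CRCisHC}: the homoclinic class of a sink or a source equals its orbit, so if such a point were in $H(P, g) =$ chain recurrence class of $P(g)$, the class would collapse to a single orbit, contradicting $P(g) \in H(P, g)$.

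For the absence of dominated splittings, I use the complex eigenvalue structure at $P(g)$ and $Q(g)$, which persists on $\mathcal{U}$. At $Q(g)$, (W5) yields two complex conjugate pairs that each span an irreducible real $2$-plane in $T_{Q(g)}M$, so any dominated splitting at $Q(g)$ must consist of exactly two summands of dimensions $(2, 2)$; a global dominated splitting on $H(P, g)$ would therefore have ranks $(2, 2)$. At $P(g)$, (W4) forces $\mu_1(g), \mu_2(g)$ to span an irreducible real $2$-plane, while $\sigma(g)$ and $\mu_3(g)$ contribute $1$-dimensional real eigenspaces; grouping by modulus, the only two-summand dominated splittings of $T_{P(g)}M$ have ranks $(1, 3)$ or $(3, 1)$, never $(2, 2)$. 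Since $P(g), Q(g) \in H(P, g)$, the two rank patterns are incompatible, so no dominated splitting exists.

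The main technical point I anticipate is the persistence bookkeeping around (W6): making sure that the strict pointwise three-volume expansion on $C$ upgrades to a uniform estimate valid on $\bar{C}$ for every $g$ in the chosen $\mathcal{U}$, and that $H(P, g)$ genuinely sits inside $\bar{C}$. Once these are in place, the rest is a direct combination of the two quoted lemmas with the eigenvalue incompatibility at $P(g)$ and $Q(g)$.
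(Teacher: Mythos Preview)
Your proposal does not address the stated lemma at all. Lemma~\ref{CRCisHC} is a quoted result from Bonatti--Crovisier \cite{BC}; the paper gives no proof of it, and none is expected here. What you have written is instead a proof of Proposition~\ref{global}, which \emph{uses} Lemma~\ref{CRCisHC} as a black box. Nothing in your text establishes the existence of the residual set $\mathcal{R}_1$ or the equality of chain recurrence classes with homoclinic classes for generic diffeomorphisms; you simply invoke that fact in the line ``Lemma~\ref{CRCisHC} forces $H(P,g)=H(Q,g)$''.

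If your intent was actually to prove Proposition~\ref{global}, then your argument is essentially the same as the paper's: apply Lemma~\ref{robcycle} to get a robust cycle near $f$, intersect the resulting neighborhood with $\mathcal{R}_1$, use Lemma~\ref{CRcyc} plus Lemma~\ref{CRCisHC} to conclude $H(P,g)=H(Q,g)$, rule out dominated splittings via the incompatible eigenvalue patterns at $P(g)$ and $Q(g)$, and exclude index~$1$ via the three-volume expansion on $C$. One small point: the paper proves $H(P,h)\subset C$ (not merely $\bar C$) by showing first that $W^s(P,h)\ti W^u(P,h)\subset C$ and then arguing by contradiction using $h(B)\subset\mathrm{int}(B)$ for the closure; your sketch of this step is a bit loose but the idea is the same. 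Your extra remarks about excluding indices $0$ and $4$ are harmless but unnecessary, since $P$ and $Q$ already have indices $3$ and $2$ and the question is only whether $1$ appears.
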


The following lemma is easy to prove, so the proof is left to the reader.
\begin{lem}\label{CRcyc}
Let $P$, $Q$ be hyperbolic periodic points and there exists a heterodimensional
cycle associated to two transitive hyperbolic invariant sets $\Gamma$ and $\Sigma$
and $\Gamma$ contains $P$ and $\Sigma$ contains $Q$. Then $P$ and $Q$ belong to the same 
chain recurrence class.
\end{lem}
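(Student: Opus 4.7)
The plan is to verify, directly from the definition, that for every $\varepsilon>0$ there exist $\varepsilon$-pseudo-orbits from $P$ to $Q$ and from $Q$ to $P$. First I would reduce to moving between $\Gamma$ and $\Sigma$: because $\Gamma$ is a compact transitive $f$-invariant set it is chain-transitive, so $P$ is chain-equivalent to every point of $\Gamma$; likewise $Q$ is chain-equivalent to every point of $\Sigma$. Hence it suffices to exhibit pseudo-orbits of arbitrarily small jump size from some point of $\Gamma$ to some point of $\Sigma$, and back.

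For the forward direction I use one leg of the heterodimensional cycle: pick $x \in W^u(\Gamma)\cap W^s(\Sigma)$, which exists by condition (2) of the cycle. By the definition of the invariant manifolds of hyperbolic sets, for any $\delta>0$ one can choose integers $n,m$ so large that $f^{-n}(x)$ lies within $\delta$ of some $a\in\Gamma$ and $f^{m+1}(x)$ lies within $\delta$ of some $b'\in\Sigma$. The proposed pseudo-orbit concatenates (i) an $\varepsilon$-chain inside $\Gamma$ from $P$ to $a$, produced by chain-transitivity of $\Gamma$; (ii) the genuine orbit segment $f^{-n+1}(x),f^{-n+2}(x),\ldots,f^{m}(x)$; (iii) an $\varepsilon$-chain inside $\Sigma$ from $b'$ to $Q$. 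The interface jumps $d(f(a),f^{-n+1}(x))$ and $d(f(f^{m}(x)),b')$ are each bounded by $L\delta$, where $L$ is a local Lipschitz constant of $f$, so after fixing $\varepsilon$ one takes $\delta<\varepsilon/L$ to make every jump smaller than $\varepsilon$. The backward direction is symmetric, using a point $y\in W^s(\Gamma)\cap W^u(\Sigma)$ supplied by the other leg of the cycle.

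The only step that requires any care—and the reason the author calls the statement easy—is the matching at the two interface points: one must ensure that an $\varepsilon$-chain inside a transitive compact invariant set can be made to terminate (or start) at a \emph{prescribed} point, which is standard, and that the Lipschitz bookkeeping at the two jumps linking $\Gamma$ to the transition orbit to $\Sigma$ really yields jumps below $\varepsilon$. There is no substantive obstacle; the whole argument is a routine verification of chain equivalence from transitivity of $\Gamma$, $\Sigma$ and the two intersections in the cycle.
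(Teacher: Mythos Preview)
The paper does not actually prove this lemma; it states that ``the following lemma is easy to prove, so the proof is left to the reader.'' Your argument is correct and is precisely the standard verification one would expect: transitivity of the compact invariant sets $\Gamma$ and $\Sigma$ gives chain-transitivity within each, and the two heteroclinic points supplied by the cycle let you splice genuine orbit segments between them, with the Lipschitz bookkeeping at the two junctions handled exactly as you describe.
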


Let us give the proof of proposition \ref{global}.

\begin{proof}[Proof of proposition \ref{global}]
Let $f$ be a diffeomorphism of a four-dimensional manifold that 
such that $f$ satisfies ($W$1)--($W$6).
By applying lemma \ref{robcycle} to $P$ and $Q$, we can take a diffeomorphism $g$, an open neighborhood
$\mathcal{U}$ of $g$ and two hyperbolic transitive invariant set $\Gamma$, $\Sigma$
such that for every $h \in \mathcal{U}$, the continuation $\Gamma(h)$ contains $P(h)$,
$\Sigma(h)$ contains $Q(h)$ and $f$ has a heterodimensional cycle assiciated to $\Gamma(h)$ and $\Sigma(h)$.
Note that by taking $\mathcal{U}$ sufficiently close to $f$, we can assume for every $h \in \mathcal{U}$ 
all the conditions ($W$1)--($W$6) holds except ($W$3).

Let us put $\mathcal{R} = \mathcal{U} \cap \mathcal{R}_1$, where $\mathcal{R}_1$
is the residual set obtained by lemma \ref{CRCisHC}. 
We show that for every $h \in \mathcal{R}$ conclusions of proposition \ref{global} hold.
First we show $H(P,h)$ is not dominated. By $h \in \mathcal{U}$, we know
there exists a heterodimensional cycle associated to $\Gamma(h)$ and $\Sigma(h)$
with $P \in \Gamma(h)$ and $Q \in \Sigma(h)$.
Then lemma \ref{CRcyc} implies that $P(h)$ and $Q(h)$ belong to the same
chain recurrence set. For $h \in \mathcal{R}_1$, this chain recurrence set 
coincides with $H(P,h)$ and simultaneously $H(Q, h)$, in particular $H(P, h) = H(Q, h)$.
$H(P, h)$ does not admit dominated splitting of the form $E \oplus F$ where $\dim E =1$ or $3$,
because $P(h)$ has two contracting complex eigenvalues and expanding eigenvalues.
We can also see  does not admit dominated splitting $E \oplus F$ where $\dim E =2$,
because $H(P,h)$ contains $Q(h)$ and ($W$5) holds.
Thus we know $H(P,h)$ does not admit any kind of dominated splittings.

Let us see $\mathrm{ind}(H(P,h)) = \{2, 3\}$. Since $H(P,h)$ contains $P$ and $Q$,
we know $\mathrm{ind}(H(P,h))$ contains $2$ and $3$. We need to show
$\mathrm{ind}(H(P,h))$ does not contain $1$.
To check this, we prove $H(P, h) \subset C$.
Indeed, if there is a hyperbolic periodic point $R \in H(P,h)$, 
then by assumption ($W$6) every iteration of $dh$ expands every three dimensional subspace
of the tangent space. So the indices of any periodic points in $H(P, h)$ cannot be $1$.

For the first step to see $H(P,h) \subset C$, we show $W^s(P,h) \ti W^u(P,h) \subset C$. 
Let us take $x \in W^s(P,h) \ti W^u(P,h)$.
Since $x \in W^u(P,h)$, there exists $n>0$ such that $h^{-n}(x) \in A$.
So we get $x \in h^n(A)$. Since $A$ satisfies $h(A) \subset \mathrm{int}(A)$, 
we know $h^n(A) \subset A$ for $n >0$ and this means $x \in A$. 
Now we show $x \not\in B $. 
Since $B$ satisfies $B \subset \mathrm{int}(B)$, we get $h^n(B) \subset B$.
If $x \in B$, then for all $n>0$ we have $h^n(x) \in B$, but since $x \in W^s(P,h)$
we know $f^n(x)$ converges to $P(h)$ as $n \to \infty$. This contradicts the 
fact  $P(h) \not \in B$. Thus we have $W^s(P,h) \ti W^u(P,h)$.

In the following we show $H(P, h) \subset C$.
Take $y \in H(P, h)$. By definition there exists a sequence
$(x_n) \subset W^s(P,h) \ti W^u(P,h)$ such that $x_n$ converges to $y$ as $n \to \infty$.
Since $W^s(P,h) \ti W^u(P,h) \subset A$
and $A$ is compact, we know $H(P, h) \subset A$, in particular $ y \in A$.
Let us prove $y \not\in B$. Suppose that $y \in B$ and 
let us consider the point $h(y)$. This point belongs to $\mathrm{int}(B)$ 
and it means there exists an neighborhood $U$ of $h(y)$ that is contained in $\mathrm{int}(B)$.
Since $h$ is continuous, we know the sequence $(h(x_n))$ converges to $h(y)$.
Thus we know there exists $N$ such that $h(x_N)$ belongs to $U$, in particular to $B$. 
Since $x_N \in W^s(P,h) \ti W^u(P,h)$, we have $h(x_N)\in W^s(P,h) \ti W^u(P,h)$.
This is a contradiction, because we already proved $W^s(P,h) \ti W^u(P,h)$ is disjoint from $B$.
Therefore we proved $H(P, h) \subset C$ and finished the proof of proposition \ref{global}.
\end{proof}

Let us see how to prove theorem \ref{mainth}.
We prove the following proposition in section \ref{example}.
\begin{prop}\label{local}
There exists a diffeomorphism of $f \in \mathrm{Diff}(\mathbb{R}^4)$ such that following holds:
\begin{enumerate}
\def\labelenumi{($w$\theenumi)}
\item The support of $f$ is compact.
\item There are two compact sets  $A$, $B \subset \mathbb{R}^4$ with $B \subset A$, 
$f(A) \subset \mathrm{int}(A)$ and  $f(B) \subset \mathrm{int}(B)$.
\item There exist two fixed points $P$, $Q$ of $f$ in $C:=A \setminus B$.
\item $f$ has a heterodimensional cycle associated to $P$ and $Q$.
\item $\mathrm{ind}(P) = 3$ and let $\sigma(P), \mu_1(P), \mu_2(P), \mu_3(P)$ be the 
eigenvalues of $df(P)$ in non-decreasing order of their absolue values. 
Then $\mu_1(P), \mu_2(P)$ are in $\mathbb{C} \setminus \mathbb{R}$.
\item $\mathrm{ind}(Q) = 2$ and every eigenvalue of $df(Q)$ is in $\mathbb{C} \setminus \mathbb{R}$.
\item There exists a constant $K>1$ such that $m\left( \Lambda^3(df) \right)>K$
on $C$ (we furnish $\mathbb{R}^4$ with standard Riemannian structure). 
In other words, $df(x)$ expands every three-dimensional subspace of 
$T\mathbb{R}^4|_x$ at any $x \in C$ with the rate $K$.
\end{enumerate}
\end{prop}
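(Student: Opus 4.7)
The strategy will be to define $f$ explicitly by assembling three local pieces: a linear model near $P$, a linear model near $Q$, and a compactly supported transition that both glues these blocks and produces the heterodimensional cycle. The absorbing sets $A$ and $B$ will be taken as two nested boxes large enough to contain the support of the nontrivial dynamics, with a thin inward-pushing collar inside each that secures condition $(w2)$; condition $(w1)$ is then automatic, and $(w3)$ is obtained by placing $P$ and $Q$ in $C=A\setminus B$ away from these collars. Properties $(w5)$ and $(w6)$ are secured by prescribing the linear parts at the two fixed points: at $P$, coordinates $(x_{1},x_{2},x_{3},x_{4})$ in which $df(P)$ contracts $x_{4}$ by $|\sigma|<1$, rotates and dilates the $(x_{1},x_{2})$-plane by a complex factor of modulus $\rho>1$, and expands $x_{3}$ by a real factor $|\mu_{3}|\ge\rho$; at $Q$, coordinates $(y_{1},y_{2},y_{3},y_{4})$ in which $df(Q)$ acts as a complex contraction of modulus $a<1$ on $(y_{3},y_{4})$ and as a complex expansion of modulus $b>1$ on $(y_{1},y_{2})$.

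The quantitative requirement $(w7)$ drives the choice of magnitudes. Since $m(\Lambda^{3}(df))$ equals the product of the three smallest singular values of $df$, the binding inequalities at the two fixed points are
\[
|\sigma|\,\rho^{2}\;>\;K\qquad\text{and}\qquad a^{2}\,b\;>\;K,
\]
which are satisfiable with $|\sigma|,a<1$ provided $\rho$ and $b$ are taken sufficiently large. I would additionally align the complex expanding plane at $P$ with the complex expanding plane at $Q$ in the ambient coordinates, so that a single two-plane distribution is strongly expanded by $df$ throughout $C$; this coherence is what makes a pointwise lower bound on $m(\Lambda^{3}(df))$ plausible away from the two linear blocks.

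To realize $(w4)$, I would construct the two branches of the heterodimensional cycle in disjoint compactly supported regions. The intersection $W^{u}(P)\cap W^{s}(Q)$ has total dimension $3+2=5>4$ and can be made transverse by a small local bump that bends a strand of $W^{u}(P)$ into $W^{s}(Q)$ through a fundamental domain. The other intersection $W^{s}(P)\cap W^{u}(Q)$ has total dimension $1+2=3<4$ and is necessarily quasi-transverse; it is produced by a second bump, supported in a different fundamental domain, that nudges the one-dimensional branch $W^{s}(P)$ onto the two-dimensional $W^{u}(Q)$. Once both bumps are in place and the linear blocks have been joined, the heterodimensional cycle associated to $P$ and $Q$ exists by construction.

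The principal obstacle, and what I expect to absorb most of Section \ref{example}, is the simultaneous verification of $(w7)$ on the transition region and on the cycle-creating patches. The inequality $m(\Lambda^{3}(df))>K$ is a pointwise condition on all of $C$, and both the interpolation between the two linear blocks and the perturbations used to generate the heterodimensional intersections can in principle pull the product of the three smallest singular values below $K$. I would manage this by keeping the connecting map in a block-triangular form that respects the alignment of the strongly expanding two-plane with the ambient $(x_{1},x_{2})$-plane, so that the three-plane Jacobian factors as a product dominated by the area expansion on this distinguished plane, and by taking the bumps that create the cycle to be $C^{1}$-small compared with the slack left in $|\sigma|\rho^{2}-K$ and $a^{2}b-K$. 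Balancing the topological cycle construction against this metric expansion condition is the true content of Proposition \ref{local}.
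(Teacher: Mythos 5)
Your overall strategy matches the paper's: define local models near $P$ and $Q$ so that ($w$5), ($w$6) hold, use nested boxes as absorbing sets for ($w$2), create the heterodimensional cycle with a compactly supported perturbation, and verify ($w$7) by controlling singular values. The binding inequalities you write down, $|\sigma|\rho^{2}>K$ and $a^{2}b>K$, are exactly the pointwise requirements at the two fixed points. However, your plan for reconciling ($w$4) with ($w$7) has a genuine gap, and it is precisely the gap the paper's argument is designed to avoid.

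You propose to take the bump that produces the quasi-transverse intersection $W^{s}(P)\cap W^{u}(Q)$ to be ``$C^{1}$-small compared with the slack'' in the singular-value estimates. This will not work: that bump must move a piece of $W^{u}(Q)$ across a definite macroscopic distance onto a piece of $W^{s}(P)$ (in the paper's coordinates, roughly from $(0,10,5,0)$ to $(10,0,0,0)$), so it cannot be made $C^{1}$-small without a prior construction that makes the two invariant manifolds nearly touch --- and nothing in your setup arranges that. The paper resolves this by introducing a scaling parameter $\lambda$: the two ``extra'' coordinate directions are expanded by a one-dimensional map $H$ with $H'\equiv\lambda$ on the relevant region, the remaining two coordinate derivatives are bounded below by a constant $c$ independent of $\lambda$, and the rotation $\Theta$ and the cycle-creating map $\Upsilon$ are \emph{fixed} diffeomorphisms independent of $\lambda$, so their contributions $c_{\Theta}$, $c_{\Upsilon}$ to $m(\Lambda^{3}(d\,\cdot\,))$ are harmless positive constants. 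One then gets $m(\Lambda^{3}(d(\Upsilon\circ\Phi)))\geq c_{\Upsilon}c_{\Theta}c\,\lambda$, and taking $\lambda$ large yields ($w$7) without any smallness assumption on the cycle-creating perturbation. You should replace your ``small bump'' reasoning by an analogous overpowering argument; otherwise the argument does not close.

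Two further points worth noting. First, the paper makes the intersection $W^{u}(P)\cap W^{s}(Q)$ come for free, not from a bump: the coordinate map in the $y$-direction has $0$ as a repeller and $10$ as an attractor, so the segment of the $y$-axis between $P=(0,0,0,0)$ and $Q=(0,10,0,0)$ lies simultaneously in $W^{u}(P)$ and $W^{s}(Q)$. Only the quasi-transverse branch $W^{s}(P)\cap W^{u}(Q)$ needs the extra map $\Upsilon$, and the conditions ($\Phi$5)--($\Phi$9) on orbits avoiding the support of $\Upsilon$ are exactly what ensures $\Upsilon$ does not destroy the other connections. Second, be careful about which plane carries the strong expansion: in the paper the strong $\lambda$-expansion sits in the $(z,w)$-directions and the complex pair $\mu_{1},\mu_{2}$ at $P$ arises by rotating the $(y,z)$-plane, mixing a moderate expansion with a strong one; this is not the same as aligning a fixed ``complex expanding plane'' across $C$ as you describe, and the distinction matters when checking that all four required eigenvalues are non-real while the singular values still give ($w$7).
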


Let us prove theorem \ref{mainth} by using proposition \ref{global} and proposition \ref{local}.
\begin{proof}[Proof of theorem \ref{mainth}.]
Let us take a diffeomorphism $f$ of $\mathbb{R}^4$ which satisfies 
($w$1)--($w$7) by using proposition \ref{local}.
Given four-dimensional compact manifold $M$, 
we take a point $x \in M$ and a coordinate chart $\phi :U \to \mathbb{R}^4 $  around $x$.
By changing the coordinate if necessary, we can assume $\phi(U)$ contains 
the support of $f$.
Then we can define a diffeomorphism $F \in \mathrm{Diff}^1(M)$
as follows: $F(x) = \phi^{-1} \circ f \circ \phi(x)$ if $x \in U$, otherwise $F(x) = x$.
Let us denote the standard Riemannian metric on $\mathbb{R}^4$ as $g$.
We can construct a Riemannian metric $\tilde{g}$
on $M$ that coincides with the pullback of $g$ by $\phi$ at every point in $\phi^{-1}(A)$
with the partition of unity.
Now by applying proposition \ref{global} to the triplet $(M,F,\tilde{g})$,
we can find the diffeomorphism and its open neighborhood
that satisfy the conclusion of theorem \ref{mainth}.
\end{proof}
Thus what is left to us is the proof of proposition \ref{local}.
Let us give its proof in the next section.

\section{Construction of the diffeomorphism}\label{example}
In this section, we give the proof of proposition \ref{local}.
\subsection{Notations and sketch of the proof}
In this subsection, we prepare some notations 
and provide the naive idea of the proof of proposition \ref{local}.
\begin{figure}[t]
 \begin{center}
  \input{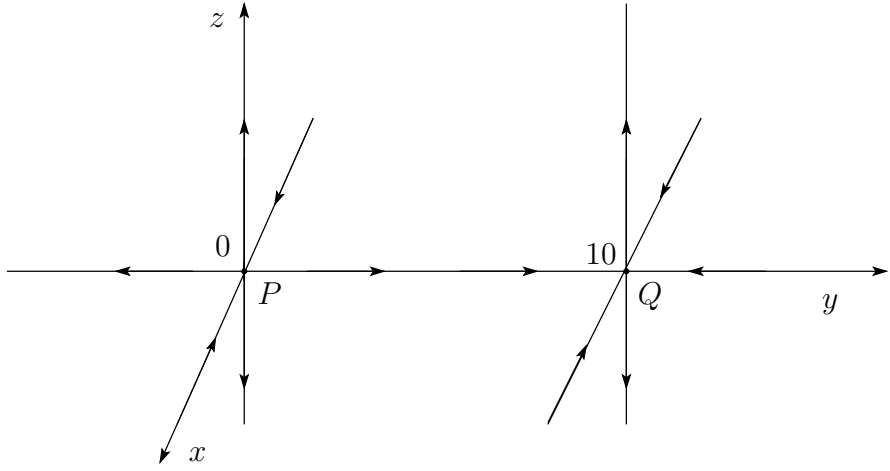}
 \end{center}
 \caption{Dynamics of $\Phi$.}
 \label{fig:phi}
\end{figure}

We identify the point $X$ of $\mathbb{R}^4$ with the vector
that starts from the origin and ends at $X$. Under this identification, 
we define the addition between any two points of $\mathbb{R}^4$ and multiplication
by real numbers.
We denote the standard Euclidean distance of $\mathbb{R}^4$ as $d(\,\cdot\, , \,\cdot\, )$.
We sometimes use the $\ell^{\infty}$-distance of $\mathbb{R}^4$ and denote it 
as $d_1(\,\cdot\, , \,\cdot\, )$. More presicely, given 
$X=(x_1, x_2, x_3, x_4), Y=(y_1, y_2, y_3, y_4) \in  \mathbb{R}^4$, we define
\[
d(X, Y) = \bigg( \, \sum_{i =1,\ldots ,4} (x_i - y_i)^2 \bigg)^{1/2},  \quad  d_1(X, Y) = \max_{i =1,\ldots ,4} |x_i - y_i|.
\]
We name some points and subsets of $\mathbb{R}^4$.
First, we name some points as follows:
We define three subsets in $\mathbb{R}^4$ as follows: 
\begin{eqnarray*}
\ell_1  & := \{ (10 +x , 0, 0, 0)  \,\, | \,\, |x| < 0.2 \}, \\
\ell_2  & := \{ (0, 5+y, 0, 0)      \,\, | \,\, |y| < 1 \}, \\
\varpi  & := \{ (0, 10, z, w)        \,\, | \,\, |z|, |w| < 0.2 \}.
\end{eqnarray*}

For $X \in \mathbb{R}^4$ and $l >0$,
by $\mathcal{B}(X, l)$ we denote four-dimensional cube with edges of length $2l$ centered at $X$.
More presicely, we put
\[
\mathcal{B}(X, l) :=\{ Z \in \mathbb{R}^4 \mid d_1(X, Z) \leq l \}.
\]
With this notation, we define as follows:
For $X, Y \in \mathbb{R}^4$ and $l < 2d(X, Y)$,
by $\mathcal{C}(X,Y, l)$, we denote four-dimensional box defined as the collection of points
whose $\ell^{\infty}$-distance from the segment joining $X$ and $Y$ is less than $l$. 
More presicely, we put 
\[\mathcal{C}(X,Y, l) := \bigcup_{0 \leq t \leq 1} \mathcal{B}(tX+(1-t)Y, l). \]
With this notation, we define 
\[
D := \mathcal{C}(C_1, C_{2} , 1) \cup \mathcal{C}(C_{2}, C_{3} , 0.7) \cup 
\mathcal{C}(C_{3}, C_{4} , 0.4).
\]
Throughout this section, $\lambda$ denotes a real number greater than $20$. 
We define as follows:
Note that these sets depend on the value of $\lambda$ 
while the points and sets defined previously are independent of $\lambda$.

We give the idea of the proof of proposition \ref{local}. We construct two diffeomorphisms
$\Phi, \Upsilon$ of $\mathbb{R}^4$ such that the composition $\Upsilon \circ \Phi$
gives a diffeomorphism that satisfies conditions ($w$1)--($w$7) of proposition \ref{local}
except ($w$4).
Let us briefly see the role of each diffeomorphism.
$\Phi$ is a fundamental diffeomorphism. $\Phi$ has two fixed points $P$ and $Q$
whose eigenvalues are as is described in ($w$5)--($w$6)
and there is an intersection between $W^u(P)$ and $W^s(Q)$ (see Figure \ref{fig:phi}).
Furthermore, $\Phi$ is constructed so that once a point escapes, then 
it never returns close to $P$ and $Q$ (this corresponds to the condition ($w$2)).

To obtain condition ($w$4), we need to connect $W^u(Q)$ and $W^s(P)$.
The diffeomorphism $\Upsilon$ is utilized for this purpose.
$\Upsilon$ pushes $W^u(Q, \Phi)$ so that 
it has intersection with $W^s(P)$. 
We need to guaranatee this perturbation give little effect on 
the connection between $W^u(P)$ and $W^s(Q)$ and the whole structure of the dynamics.
So the push makes a detour (see see Figure \ref{fig:Upsilon}).

The diffeomorphism $\Upsilon$ is constructed 
independent of the value of $\lambda$. After the construction of $\Upsilon$, 
we take $\lambda$ sufficiently large so that the effect of  $d\Upsilon$
becomes ignorable when we check the condition ($w$7) 
for $\Upsilon \circ \Phi$. 

\begin{figure}[t]
 \begin{center}
  \input{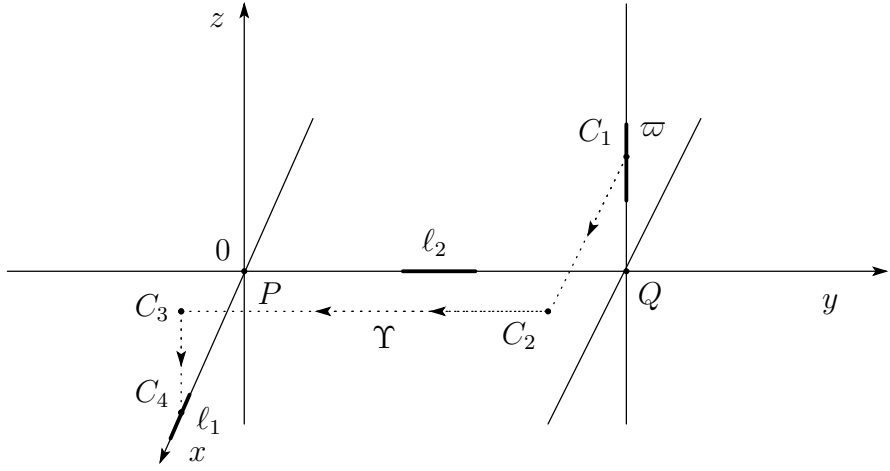}
 \end{center}
 \caption{The role of  $\Upsilon$.}
 \label{fig:Upsilon}
\end{figure}

Finally, we prepare a lemma that we frequently use throughout this section.
We omit its proof, since it can be found in many textbooks.
\begin{lem}\label{cptsupp}
Given two closed interval $[a, b] \subset [c, d]$ ($a, b, c, d$ may be $\pm\infty$), 
there exists a $C^{\infty}$-function $\rho[a,b,c,d](t) : \mathbb{R} \to \mathbb{R}$ that satisfies the following conditions:
\begin{itemize}
\item For all $t \in \mathbb{R}, 0 \leq \rho[a,b,c,d](t) \leq 1$.
\item If $t \in [a, b]$, then $\rho[a,b,c,d](t) = 1$.
\item If $t  \not\in [c, d]$, then $\rho[a,b,c,d](t) = 0$.
\end{itemize}
\end{lem}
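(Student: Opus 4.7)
The plan is to build $\rho[a,b,c,d]$ from the classical smooth cutoff. First, define the standard nowhere-analytic smooth function
\[
\phi(t) = \begin{cases} e^{-1/t} & t > 0, \\ 0 & t \leq 0, \end{cases}
\]
which is readily checked to be $C^\infty$ on $\mathbb{R}$ (all derivatives at $0$ from the right vanish). From $\phi$, assemble the smooth monotone step
\[
\psi(t) = \frac{\phi(t)}{\phi(t) + \phi(1-t)},
\]
whose denominator is strictly positive for every $t \in \mathbb{R}$, so that $\psi \in C^\infty(\mathbb{R})$, $0 \leq \psi \leq 1$, $\psi \equiv 0$ on $(-\infty,0]$, and $\psi \equiv 1$ on $[1,+\infty)$.

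Next, treat the generic case $-\infty < c < a \leq b < d < +\infty$. Set
\[
\rho(t) \;=\; \psi\!\left(\frac{t-c}{a-c}\right) \cdot \psi\!\left(\frac{d-t}{d-b}\right).
\]
The first factor is $C^\infty$, vanishes for $t \leq c$, equals $1$ for $t \geq a$, and takes values in $[0,1]$ in between; the second factor is $C^\infty$, equals $1$ for $t \leq b$, vanishes for $t \geq d$, and lies in $[0,1]$ in between. Their product then meets all three requirements of the lemma: it is smooth, lies in $[0,1]$, is identically $1$ on $[a,b]$, and is identically $0$ outside $[c,d]$. In the boundary cases, I would simply drop the factor that becomes trivial: if $c = -\infty$ (and $a$ finite) keep only the decreasing factor; if $d = +\infty$ (and $b$ finite) keep only the increasing factor; if both $c = -\infty$ and $d = +\infty$ take $\rho \equiv 1$; and if additionally $a = -\infty$ or $b = +\infty$ on one side, only the opposite transition is needed, using $\psi$ composed with an affine reparametrisation of the surviving finite interval.

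\emph{Main obstacle.} There is essentially none; the only point requiring any care is the verification that $\phi$ is $C^\infty$ at the origin, which is the classical induction showing that every derivative of $e^{-1/t}$ on $t > 0$ is of the form $P(1/t)\,e^{-1/t}$ for some polynomial $P$, and such expressions tend to $0$ as $t \to 0^+$. Because this construction is entirely standard and available in most differential topology textbooks, as the lemma statement itself notes, no further novelty is required.
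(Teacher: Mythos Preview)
Your construction is correct and is exactly the standard textbook argument the paper has in mind; in fact the paper gives no proof at all, explicitly writing ``We omit its proof, since it can be found in many textbooks.'' Your handling of the infinite-endpoint cases by dropping the corresponding factor is also fine, and the only degenerate configuration you do not cover, namely a finite endpoint with $c=a$ or $b=d$, is not realizable by any smooth function and never occurs in the paper's applications.
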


\subsection{Construction of $\Phi$}
In this section, we construct the diffeomorphism $\Phi$.
\begin{prop}\label{Phi}
There exists a diffeomorphism $\Phi : \mathbb{R}^4 \to \mathbb{R}^4$ 
that has the following properties:
\begin{enumerate}
\def\labelenumi{($\Phi$\theenumi)}
\item The support of $\Phi$ is contained in $[-\lambda^3-1, \lambda^3+1]^4$. 
\item $\Phi(A) \subset \mathrm{int}(A)$ and $\Phi(B) \subset \mathrm{int}(B')$.
\item $P$, $Q$ are fixed points of $\Phi$.
\item $\mathrm{ind}(P) = 3$ and let $\sigma(P), \mu_1(P), \mu_2(P), \mu_3(P)$ be the 
eigenvalues of $df(P)$ in non-decreasing order of their absolue values.
Then $\mu_1(P), \mu_2(P)$ are in $\mathbb{C} \setminus \mathbb{R}$.
\item $\ell_1 \subset W^s(P, \Phi)$ and $ \cup _{n \geq 1} \Phi^{n}(\ell_1)  \cap D = \emptyset$.
\item $\ell_2 \subset W^u(P, \Phi)$ and $ \cup _{n \geq 0} \Phi^{-n}(\ell_2)  \cap D = \emptyset$.
\item $\mathrm{ind}(Q) = 2$ and every eigenvalue of $d\Phi(Q)$ is in $\mathbb{C} \setminus \mathbb{R}$. 
\item $\ell_2 \subset W^s(Q, \Phi)$ and $ \cup _{n \geq 0} \Phi^{n}(\ell_2)  \cap D = \emptyset$.
\item $\varpi \subset W^u(Q, \Phi)$ and $ \cup _{n \geq 1} \Phi^{-n}(\varpi)  \cap D = \emptyset$.
\item There exists a constant $c_{\Phi}>0$ (independent of $\lambda$) 
such that the inequality $m \left( \Lambda^3(d\Phi)(X) \right) > c_{\Phi}\lambda$ holds for every $X \in C$.
\end{enumerate}
\end{prop}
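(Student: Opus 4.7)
The plan is to build $\Phi$ from linear hyperbolic germs at $P$ and $Q$ stitched together by a strongly one-directional conveyor of expansion rate $\lambda$ and smoothed with the cut-offs from lemma \ref{cptsupp}. The parameter $\lambda$ serves simultaneously to enforce the 3-volume expansion condition ($\Phi$10) on $C$, to push orbits out of $B$ towards $\partial A$ (giving ($\Phi$1), ($\Phi$2)), and to force the support of $\Phi$ into the large cube $[-\lambda^{3}-1,\lambda^{3}+1]^{4}$.

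For the local data, on a small neighbourhood of $P=0$ I set $\Phi$ equal to a linear map $L_{P}$ whose stable direction is the $x$-axis (so $\ell_{1}\subset W^{s}(P)$), whose $y$-axis is a real expanding direction containing $\ell_{2}\subset W^{u}(P)$, and whose $(z,w)$-plane is a complex rotating expansion with eigenvalues $\mu_{1},\mu_{2}\in\mathbb{C}\setminus\mathbb{R}$. Near $Q=(0,10,0,0)$ I set $\Phi$ equal to $L_{Q}$ with complex contraction on the $(x,y)$-plane (drawing the endpoints of $\ell_{2}$ into the local stable manifold of $Q$ after finitely many iterates) and complex expansion on the $(z,w)$-plane containing $\varpi\subset W^{u}(Q)$. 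The expanding moduli at both fixed points are scaled with $\lambda$ so that the smallest 3-plane expansion of $L_{P}$ and $L_{Q}$ is bounded below by $c_{\Phi}\lambda$ for a constant $c_{\Phi}$ independent of $\lambda$.

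Globally, I extend $\Phi$ to an intermediate region by a single transfer map that sends a forward image of $\ell_{2}$ into a small disk of $W^{s}_{\mathrm{loc}}(Q)$, thereby realising $\ell_{2}\subset W^{u}(P)\cap W^{s}(Q)$. Between $B$ and $\partial A$, $\Phi$ is a shear-type map whose principal component is expansion of rate $\lambda$ in one coordinate direction; a radial bump function interpolates smoothly to the identity outside the large cube. The escape tube and the transfer map are routed so that the forward and backward orbits of $\ell_{1}$, $\ell_{2}$, $\varpi$ lie in the complement of $D$: since $\ell_{1}$, $\ell_{2}$, $\varpi$ lie on coordinate axes while $D$ is a thin tube around a polygonal chain between the off-axis points $C_{i}$, the required disjointness ($\Phi$5)--($\Phi$9) is an easy geometric check once the orbits are placed in the appropriate region of $\mathbb{R}^{4}$.

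The main obstacle is ($\Phi$10): one must have $m(\Lambda^{3}d\Phi)>c_{\Phi}\lambda$ \emph{uniformly} on $C$, not just at $P$ and $Q$. In the interpolation zones $d\Phi$ picks up terms involving derivatives of cut-off bumps, which are $O(1)$ and independent of $\lambda$. I would control these by writing $\Phi$ in each interpolation zone as $X\mapsto L_{\lambda}X+\eta(X)$, where $L_{\lambda}$ is linear with minimum 3-plane expansion $\sim\lambda$ and $\eta$ is a $C^{1}$-bounded vector field whose bound is independent of $\lambda$ (it is a product of a unit-scale bump with unit-scale vectors). The singular values of $d\Phi$ then differ from those of $L_{\lambda}$ by $O(1)$, and taking $\lambda$ large absorbs the error and yields the bound with some $c_{\Phi}$ independent of $\lambda$. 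All remaining properties are bookkeeping of this explicit construction.
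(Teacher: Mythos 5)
Your blueprint shares the paper's geometric skeleton — strong $\lambda$-expansion in the $(z,w)$-directions, $\ell_1,\ell_2,\varpi$ placed on coordinate axes so they land in the obvious invariant manifolds, and complex eigenvalues produced by a compactly supported rotation — but two points in your sketch do not actually go through.

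First, placing the complex pair at $P$ in the $(z,w)$-plane conflicts with ($\Phi$4). The $y$-direction at $P$ has to carry the heteroclinic arc $\ell_2\subset W^u(P)\cap W^s(Q)$, so its eigenvalue stays of modest size (the paper's $G$ gives $G'(0)=10$, independent of $\lambda$). If your $(z,w)$-complex pair has $\lambda$-scale modulus, then the ordering of the unstable eigenvalues makes $\mu_1$ the \emph{real} $y$-eigenvalue, which is exactly what ($\Phi$4) forbids. The paper resolves this by letting $\Theta$ rotate the $(y,z)$-plane (not $(z,w)$) at $P$: then $d(\Theta\circ\Psi)(P)$ has a complex pair of modulus $\sqrt{10\lambda}$, which for $\lambda>10$ is strictly below the remaining real unstable eigenvalue $\lambda$ coming from $w$, so $\mu_1,\mu_2$ are the complex ones.

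Second, and more seriously, your argument for ($\Phi$10) does not close. Writing $\Phi=L_\lambda+\eta$ with $\|d\eta\|=O(1)$ and invoking ``$\lambda$ large absorbs the error'' fails because $L_\lambda$ has a contracting singular value $\sigma_1\approx 1/10$ that does \emph{not} grow with $\lambda$; an $O(1)$ additive perturbation can push $\sigma_1(d\Phi)$ arbitrarily close to $0$, and correspondingly $\det(L_\lambda+d\eta)$ differs from $\det L_\lambda\sim\lambda^2$ by terms of the \emph{same} order $\lambda^2$, so $m(\Lambda^3 d\Phi)=|\det d\Phi|/\sigma_4$ is not controlled by taking $\lambda$ large. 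The paper avoids this by two devices you do not have: it decomposes $\Phi=\Theta\circ\Psi$ \emph{compositionally} and uses the multiplicativity $m(\Lambda^3 d\Phi)\geq m(\Lambda^3 d\Theta)\,m(\Lambda^3 d\Psi)$ with the $\lambda$-independent constant $c_\Theta:=\min_x m(\Lambda^3 d\Theta(x))>0$; and it places all the cut-off interpolations at the scale $\lambda^3$, far outside $C\subset[-\lambda^2,\lambda^2]^4$, so that on $C$ the map $\Psi$ is \emph{exactly} the product $(F,G,H,H)$ with diagonal derivative and $m(\Lambda^3 d\Psi)=F'G'\lambda$ — there is no additive error term at all. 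If you want to keep the additive decomposition, you would need a quantitative bound $\|d\eta\|<\sigma_1(L_\lambda)$ uniformly on $C$, which your sketch does not establish. (A minor further difference: your ``single transfer map'' from $W^u(P)$ to $W^s_{\mathrm{loc}}(Q)$ is replaced in the paper by the one-dimensional flow $G$ on the $y$-axis, which repels from $0$ and attracts to $10$, so the heteroclinic connection is built into $\Psi$ and creates no extra interpolation zone to control.)
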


To prove proposition \ref{Phi}, we create auxiliary diffeomorphims $F$, $G$ and $H$ of $\mathbb{R}$
and $\Theta$ of $\mathbb{R}^4$.
We first list up the properties of these diffeomorphisms and prove such diffeomorphims exist.
After that we give the proof of proposition \ref{Phi}.

\begin{lem}\label{diffF}
\def\labelenumi{($F$\theenumi)}
There exists a $C^{\infty}$-diffeomorphism $F$ of $\mathbb{R}$ that satisfies the following:
\begin{enumerate}
\def\labelenumi{($F$\theenumi)}
\item The support of $F$ is contained in $[-\lambda^3, \lambda^3]$.
\item For any $x \in [-\lambda^2, \lambda^2] \setminus \{0\}$, the inequality  $0 < F(x) /x < 1/9$ holds.
\item There is a constant $c_1 >0$ (independent of $\lambda$) such that the inequality
$\min_{x \in [-\lambda ^2 , \lambda ^2]} F'(x) > c_1$ holds.
\end{enumerate}
\end{lem}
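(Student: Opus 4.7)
The plan is to produce $F$ explicitly as a smooth modification of the identity by means of a single bump function. Using Lemma \ref{cptsupp}, take $\rho(x) := \rho[-\lambda^2,\lambda^2,-\lambda^3,\lambda^3](x)$; by replacing $\rho$ with $(\rho(x)+\rho(-x))/2$ if necessary we may assume $\rho$ is even in $x$ while keeping the plateau values $\rho\equiv 1$ on $[-\lambda^2,\lambda^2]$ and $\rho\equiv 0$ off $[-\lambda^3,\lambda^3]$. Then define
\[
F(x) := x\bigl(1 - \tfrac{9}{10}\rho(x)\bigr).
\]

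Properties ($F$1) and ($F$2) are then immediate from the definition. Outside $[-\lambda^3,\lambda^3]$ the factor $\rho$ vanishes, so $F(x)=x$ and $F$ is the identity there, giving ($F$1). On $[-\lambda^2,\lambda^2]\setminus\{0\}$ the factor $\rho$ is identically $1$, so $F(x)=x/10$ and $F(x)/x = 1/10 \in (0,1/9)$, giving ($F$2). For ($F$3), differentiating yields $F'(x) = 1 - \tfrac{9}{10}\rho(x) - \tfrac{9}{10}x\rho'(x)$, which reduces to $1/10$ on $[-\lambda^2,\lambda^2]$ because $\rho'$ vanishes there; one can therefore take $c_1 := 1/20$, which is plainly independent of $\lambda$.

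The only technical point I anticipate is verifying that $F$ is a genuine $C^{\infty}$-diffeomorphism of $\mathbb{R}$, i.e.\ that $F'(x)>0$ everywhere, including in the transition bands $[\lambda^2,\lambda^3]$ and $[-\lambda^3,-\lambda^2]$ where $|\rho'(x)|$ can in principle be large and $|x|$ is certainly large. This is where the evenness assumption on $\rho$ pays off: on $[\lambda^2,\lambda^3]$ one has $\rho'(x)\leq 0$ and $x\geq 0$, so the cross-term $-\tfrac{9}{10}x\rho'(x)$ is nonnegative, and by the symmetric reasoning on $[-\lambda^3,-\lambda^2]$ the same cross-term is again nonnegative there. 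Combined with the uniform lower bound $1-\tfrac{9}{10}\rho \geq 1/10$, this gives $F'(x) \geq 1/10$ on all of $\mathbb{R}$, which simultaneously establishes the diffeomorphism property and confirms the uniform lower bound used in ($F$3). Smoothness of $F$ is inherited directly from $\rho$, completing the construction.
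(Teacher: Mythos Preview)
Your construction is sound in spirit and more elementary than the paper's, but there is one unjustified step. You claim that evenness of $\rho$ yields $\rho'(x)\le 0$ on $[\lambda^2,\lambda^3]$; it does not. Evenness only tells you $\rho'(-x)=-\rho'(x)$, so it transfers a sign condition from one transition band to the other, but it does not by itself produce the sign condition. Lemma~\ref{cptsupp} as stated guarantees only $0\le\rho\le1$ with the prescribed plateau values; nothing prevents $\rho$ from oscillating on $[\lambda^2,\lambda^3]$, and a single bump with $\rho'(x_0)>0$ at some $x_0$ in that band would make $-\tfrac{9}{10}x_0\rho'(x_0)$ a large \emph{negative} term, potentially driving $F'(x_0)$ below zero. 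The fix is immediate: simply take $\rho$ to be non-increasing on $[0,\infty)$ (the standard construction via $e^{-1/t}$ gives this), and then your sign argument goes through verbatim on both bands.

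With that repair your proof is correct and takes a genuinely different route from the paper. The paper obtains $F$ as the time-$1$ map of the compactly supported vector field $\dot x = -\log(10)\,x\cdot\rho[-(\lambda^3-2),\lambda^3-2,-(\lambda^3-1),\lambda^3-1](x)$; on the inner region the flow is linear and gives $F(x)=x/10$, matching your formula there. The advantage of the flow approach is that a time-$1$ map of a smooth compactly supported vector field is automatically a diffeomorphism, so no separate verification of $F'>0$ in the transition zone is needed. Your approach, by contrast, is completely explicit and makes ($F$3) trivial to read off, at the cost of having to control $F'$ by hand in the transition bands---which, once monotonicity of $\rho$ is assumed, is the one-line estimate you gave.
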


\begin{proof}
Let us consider the vector field on $\mathbb{R}$ given by $\dot{x}(t) = f(x)$, 
where $f(x)$ is a $C^{\infty}$-function on $\mathbb{R}$ that has the following properties:
\begin{itemize}
\item If  $|x| < \lambda^3-2,  f(x) = -\log(10)x$.
\item If  $|x| > \lambda^3 -1, f(x) =0$.
\end{itemize}
We can construct such $f$ as follows:
\[
f(x) = - \log(10)x\rho[-(\lambda^3-2), \lambda^3-2,-(\lambda^3-1), \lambda^3-1](x).
\]
Let us take the time-$1$ map of this vector field and denote it as $F(x)$
(we can consider the time-$1$ map for all $x \in \mathbb{R}$ 
since $f$ has  compact support and Lipschitz continuous).
Then it is not difficult to check $F(x)$ satisfies all the properties ($F$1)--($F$3).
So the detail is left to the reader.
\end{proof}

\begin{rem}
By the conditions ($F$1)--($F$3), we can prove $F$ satisfies the followoing conditions:
\begin{enumerate}
\def\labelenumi{($F$\theenumi)}
\setcounter{enumi}{3}
\item $0$ is an attracting fixed point of $F$. 
\item $F([-\lambda^2, \lambda^2]) \subset (-\lambda^2, \lambda^2)$.
\end{enumerate}
\end{rem}

\begin{lem}\label{diffG}
\def\labelenumi{($G$\theenumi)}
There exists a $C^{\infty}$-diffeomorphism $G$ of $\mathbb{R}$ has the following properties:
\begin{enumerate}
\item The support of $G$ is contained in $[-\lambda^3, \lambda^3]$.
\item $G([-\lambda^2, \lambda^2]) \subset (-\lambda^2, \lambda^2)$.
\item For any $x \in [-1/100, 1/100] \setminus \{0\}$, the inequality  $ 9 < G(x) /x  < 10$ holds.
\item For any $x \in [-1/100, 1/100] \setminus \{0\}$, the inequality  $  0 < (G(x +10) -10 ) /x < 1/10$ holds.
\item For any $x \in (0, 10)$, $\lim_{n \to \infty} G^{-n}(x) = 0$ and $\lim_{n \to \infty} G^{n}(x) = 10$.
\item There is a constant $c_2 >0$ (independent of $\lambda$) such that the inequality 
$\min_{y \in [-\lambda ^2 , \lambda ^2]} G'(y) > c_2$ holds.
\end{enumerate}
\end{lem}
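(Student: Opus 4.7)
The plan is to mimic the construction of $F$ in Lemma \ref{diffF}: take $G$ to be the time-$1$ map of a compactly supported $C^{\infty}$ vector field $\dot x = h(x)$. The function $h$ has to be designed so that $0$ is a hyperbolic repelling zero with time-$1$ multiplier in $(9,10)$, $10$ is a hyperbolic attracting zero with time-$1$ multiplier in $(0,1/10)$, and every point of $[-\lambda^2,\lambda^2]$ is pulled strictly into the interior of that interval; most delicately, $\|h'\|_{\infty}$ must remain bounded by a constant independent of $\lambda$.

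First I would set $\alpha := \log 9.5$ and on the interval $[-12,12]$ define the cubic $h_0(x) := \alpha\, x(1-x/10)(1+x/10)$. It vanishes simply at $-10,0,10$ with $h_0'(0)=\alpha$ and $h_0'(\pm 10) = -2\alpha$, is positive on $(0,10)\cup(-\infty,-10)$, and negative on $(-10,0)\cup(10,\infty)$, so that $0$ is a repeller and $\pm 10$ are attractors for the flow. I would then extend $h_0$ to a $C^{\infty}$ function $h$ on $\mathbb{R}$ by smoothly interpolating from $h_0(11)$ down to a negative constant $-c_0$ on $[11,12]$, keeping $h\equiv -c_0$ on $[12,\lambda^3-1]$, and cutting off from $-c_0$ to $0$ on $[\lambda^3-1,\lambda^3]$ using the bump function of Lemma \ref{cptsupp}; symmetrically on the negative side. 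With this definition both $\|h\|_{\infty}$ and $\|h'\|_{\infty}$ are bounded by constants that do not depend on $\lambda$. Put $G := \phi_1^h$.

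Conditions (G1) and (G5) then follow from the support and the sign structure of $h$. Condition (G3) comes from the Taylor expansion of the flow near $0$: $G(x) = e^{\alpha} x + O(x^2) = 9.5\, x + O(x^2)$, so for $|x|\le 1/100$ the nonlinear correction is of order $10^{-4}$ and $G(x)/x \in (9,10)$. An analogous expansion at $10$ gives $G(10+u)-10 = e^{-2\alpha}\, u + O(u^2)$ with $e^{-2\alpha} = 1/90.25 < 1/10$, yielding (G4). For (G2), the sign of $h$ on each of the intervals $[-\lambda^2,-10]$, $[-10,0]$, $[0,10]$, $[10,\lambda^2]$ forces each orbit to stay in the same interval and tend to the adjacent attractor; in particular $G(\pm\lambda^2)\in(-\lambda^2,\lambda^2)$.

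The main obstacle is (G6). Using the identity $G'(y) = \exp\bigl(\int_0^1 h'(\phi_t^h(y))\,dt\bigr)$, a $\lambda$-uniform lower bound $G'(y) > c_2$ follows at once from the uniform bound on $\|h'\|_{\infty}$. This is exactly why the cubic $h_0$ must be truncated to a bounded amplitude outside $[-12,12]$: extending $h = \alpha(x - x^3/100)$ through $[10,\lambda^3-1]$ would give $|h'(\lambda^2)|\asymp \lambda^4$, and a short computation via the alternative formula $G'(y) = h(G(y))/h(y)$ shows that the exponent would pick up a $\lambda$-dependent term of size $\sim -\log(\lambda^6)$, destroying (G6). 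Replacing the cubic tail by a bounded vector field eliminates this pathology and yields the required $c_2 = e^{-\|h'\|_{\infty}}$.
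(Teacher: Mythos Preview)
Your construction follows the same template as the paper's: both take $G$ to be the time-$1$ map of a compactly supported $C^\infty$ vector field on $\mathbb{R}$ that agrees with a scaled cubic $-c\,y(y-10)(y+10)$ near the fixed points $0,\pm 10$. The paper, however, keeps the cubic on the full interval $|y|<\lambda^3-2$ and truncates only in a unit neighbourhood of $\pm\lambda^3$, whereas you flatten the field to a bounded constant outside $[-12,12]$. Your last paragraph is not merely a precaution but an actual correction of the paper's sketch: with the unmodified cubic one has $|g(\lambda^2)|\asymp\lambda^6$, and since that strong drift carries $\lambda^2$ into an $O(1)$ neighbourhood of $10$ within unit time, the identity $G'(y)=g(G(y))/g(y)$ yields $G'(\lambda^2)\asymp\lambda^{-6}\to 0$. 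Hence the paper's construction does not satisfy ($G$6) as written; your truncated field, with $\|h'\|_\infty$ independent of $\lambda$, repairs this and gives $c_2=e^{-\|h'\|_\infty}$ directly from the variational formula.

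Two minor points of presentation. You define $h_0$ on $[-12,12]$ and then ``interpolate from $h_0(11)$'' on $[11,12]$, which reads as overwriting part of $h_0$; cleaner to set $h=h_0$ on $[-11,11]$ and interpolate from $h_0(11)$ to $-c_0$ on $[11,12]$. Also, since $h_0$ is odd the expansion at $0$ is in fact $G(x)=e^{\alpha}x+O(x^3)$, so your margin in ($G$3) is even wider than stated.
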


\begin{proof}
Let us consider the vector field $\dot{y}(t) = g(y)$, where $g(y)$ is a $C^{\infty}$-function 
on $\mathbb{R}$ satisfying the following properties:
\begin{itemize}
\item If  $|y| < \lambda^3-2,  g(y) = -\log(10)/100(y^3 -100y)$.
\item If  $|y| > \lambda^3 -1, g(y) =0$.
\end{itemize}
We can construct such $g$ as follows:
\[
g(y) = - \log(10)/100(y^3 -100y)\rho[-(\lambda^3-2), \lambda^3-2,-(\lambda^3-1), \lambda^3-1](y).
\]
Let us take the time-$1$ map of this vector field and denote it as $G(x)$
(we can consider the time-$1$ map for all $x \in \mathbb{R}$ 
since $g$ has  compact support and Lipschitz continuous).
Then it is not difficult to check $G(x)$ satisfies all the property ($G$1)--($G$6).
So the detail is left to the reader.
\end{proof}

\begin{rem}
By the conditions ($G$1)--($G$6), we can prove $G$ satisfies the followoing conditions:
\begin{enumerate}
\def\labelenumi{($G$\theenumi)}
\setcounter{enumi}{6}
\item $0$ is a repelling fixed point of $G$. 
\item $10$ is an attracting fixed point of $G$.
\end{enumerate}
\end{rem}

\begin{lem}\label{diffH}
There exists a $C^{\infty}$-diffeomorphism $H$ on $\mathbb{R}$ which has the following properties:
\begin{enumerate}
\def\labelenumi{($H$\theenumi)}
\item For every $z$, $H(-z) =-H(z)$. 
\item The support of $H(z)$ is contained in $[-\lambda^3, \lambda^3]$.
\item $H([-\lambda^2, \lambda^2]) \subset   (-\lambda^2, \lambda^2)$.
\item $H([ 1/2, \lambda^2]) \subset   (7, \lambda^2)$.
\item For $0 \leq z \leq 1$, $H(z) = \lambda z$. In particular, $0$ is a repelling fixed point of $H$.
\end{enumerate}
\end{lem}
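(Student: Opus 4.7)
The plan is to mimic the constructions of $F$ and $G$ in Lemmas \ref{diffF} and \ref{diffG}, defining $H$ as the time-$1$ map of a suitable $C^{\infty}$ vector field $\dot{z}=h(z)$ on $\mathbb{R}$. Here $h$ will be odd, Lipschitz and of compact support in $[-\lambda^{3},\lambda^{3}]$, which guarantees that the time-$1$ map is a globally defined $C^{\infty}$-diffeomorphism. The qualitative shape of $h$ on $[0,\infty)$ that I would require is: $h(z)=\log(\lambda)\,z$ on the entire interval $[0,\lambda]$; then $h$ smoothly decreases, crossing zero at some attracting fixed point $z^{*}\in(\lambda,\lambda^{2})$, becoming strictly negative on $(z^{*},\lambda^{3}-1)$, and vanishing flatly for $z\geq\lambda^{3}-1$. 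Such an $h$ can be assembled from the linear function $\log(\lambda)z$ and an odd ``negative bump'' concentrated near $(z^{*},\lambda^{3})$ using the plateau functions supplied by Lemma \ref{cptsupp}; the values on $z<0$ are prescribed by oddness.

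With this choice the verification of (H1)--(H5) is routine. Oddness (H1) and compactness of support (H2) pass from $h$ to its time-$1$ map. For (H5), since $h(z)=\log(\lambda)z$ holds on all of $[0,\lambda]$, the trajectory of any $z_{0}\in[0,1]$ is $z(t)=z_{0}\lambda^{t}\in[0,\lambda]$ throughout $t\in[0,1]$, so the flow is integrable in closed form and $H(z_{0})=\lambda z_{0}$; the repelling character of the origin then follows from $\lambda>1$. For (H3), the attracting fixed point $z^{*}\in(0,\lambda^{2})$ traps every trajectory starting in $(0,\lambda^{3}-1)$, so $H((0,\lambda^{2}])\subset(0,\lambda^{2})$, and the negative side is handled symmetrically by (H1). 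For (H4), $H$ is a strictly increasing diffeomorphism of $\mathbb{R}$ equal to the identity at infinity, so $H([1/2,\lambda^{2}])=[H(1/2),H(\lambda^{2})]$; combined with $H(1/2)=\lambda/2>10>7$ from (H5) and $H(\lambda^{2})<\lambda^{2}$ from (H3), the image lies in $(7,\lambda^{2})$.

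The main obstacle is reconciling (H5) with (H3). The former forces $h(z)=\log(\lambda)z$ not merely in a neighborhood of the origin but on the whole interval $[0,\lambda]$, because the trajectory of $z_{0}=1$ must remain inside the linear region for all of $t\in[0,1]$. The latter forces $h$ to become strictly negative before $z=\lambda^{2}$ so that trajectories starting near $\lambda^{2}$ are pulled back into $(-\lambda^{2},\lambda^{2})$. The vector field must therefore equal the expanding linear field $\log(\lambda)z$ on $[0,\lambda]$ and then transition smoothly into a regime with a negative tail, all while preserving $C^{\infty}$ regularity at the transition; this is precisely what the bump-function technology of Lemma \ref{cptsupp} is designed to accomplish, so the construction reduces to a routine, if slightly fussy, patching argument.
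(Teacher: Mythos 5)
Your proposal is correct but takes a genuinely different route from the paper. The paper does \emph{not} build $H$ as a time-$1$ map: instead it constructs a $C^{\infty}$ function $h\geq 0$ on $\mathbb{R}_{\geq 0}$ that plays the role of $H'$ directly, setting $H(z)=\int_0^z h(t)\,dt$ and then extending oddly. The shape of that $h$ (constant $=\lambda$ on $[0,1]$, constant $=\lambda^{-2}$ on $[2,\lambda^2]$, constant $=1$ past $\lambda^3-1$, with two bumps whose heights are tuned by an intermediate-value argument so that $\int_1^2 h=1$ and $\int_0^{\lambda^3-1}h=\lambda^3-1$) gives closed-form values $H(1)=\lambda$, $H(2)=\lambda+1$, $H(\lambda^2)=\lambda+2-2\lambda^{-2}$, and $H(z)=z$ for $z\geq\lambda^3-1$, so verifying (H2)--(H5) is a direct computation. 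Your construction instead follows the template of Lemmas \ref{diffF} and \ref{diffG}: $H$ is the time-$1$ flow of an odd compactly supported vector field $\dot z=h(z)$ with $h(z)=\log(\lambda)\,z$ on all of $[0,\lambda]$, a sign change at an attracting fixed point $z^{*}\in(\lambda,\lambda^{2})$, and a flat vanishing tail past $\lambda^{3}-1$. Your key observation that (H5) forces the linear law to hold on the whole interval $[0,\lambda]$ (not just near $0$, since the trajectory of $z_0=1$ must stay in the linear regime for the full unit time) is exactly the right subtlety, and your deduction of (H3) from the attracting fixed point and of (H4) from $H(1/2)=\lambda/2>10$ together with monotonicity is sound. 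The trade-off: the paper's antiderivative construction gives cleaner explicit values at the cost of two intermediate-value-theorem normalizations, whereas yours stays notationally uniform with $F$ and $G$ and avoids the normalization but makes (H3) rest on a qualitative trapping argument rather than an explicit bound. Both work; do make sure when you write it up that you state the placement of $z^{*}$ in $(\lambda,\lambda^{2})$ as an explicit requirement on $h$ rather than leaving it implicit, since (H3) depends on it.
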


\begin{proof}
First, we construct a $C^{\infty}$-function $h(t)$ on $\mathbb{R}_{\geq 0}$ that satisfies the following:
\begin{enumerate}
\def\labelenumi{($h$\theenumi)}
\item For all $t\geq 0, h(t) >0$. 
\item For $0 \leq t \leq 1$, $h(t) = \lambda$.
\item $ \int_1^2 h(t)\,dt = 1$.
\item For $2 \leq t \leq \lambda^2, h(t) = \lambda^{-2}$.
\item $ \int_0^{\lambda^3-1} h(t) \,dt= \lambda^3-1$
\item For $t \geq \lambda^3-1, h(t) = 1$.
\end{enumerate}
Then by putting $H(z) := \int_0^zh(t)dt$ for $z \geq 0$ and $H(z) = - H(-z)$ for $z \leq 0$
we construct a map $H(z)$.
Let us discuss how to construct $h(t)$.
We put
\begin{eqnarray*}
\rho_1(t) &= \left( \lambda - \lambda^{-2} \right)\rho[-\infty,1, -\infty, 1+\lambda^{-2}](t), \\
\rho_2(t;\alpha) &=  \alpha \rho[ 7/5, 8/5, 6/5, 9/5](t), \\
\rho_3(t) &= \left( 1 - \lambda^{-2} \right)\rho[\lambda^3-1, +\infty, \lambda^3-2, +\infty](t).\\
\rho_4(t; \beta) &= \beta\rho[\lambda^3-8/5, \lambda^3-7/5, \lambda^3-9/5, \lambda^3-6/5, ](t).
\end{eqnarray*}
Take a function 
$\eta(t;\alpha, \beta) = \rho_1(t) + \rho_2(t;\alpha) + \rho_3(t) + \rho_4(t;\beta) + \lambda ^{-2}$.
Note that for all $\alpha ,\beta \geq 0$ the function $\eta(t;\alpha, \beta)$ 
satisfies the conditions ($h$1)--($h$6) except ($h$3) and ($h$5).
We show there exist  positive real numbers $\alpha_0$ and $\beta_0$ such that ($h$3), ($h$5) hold for 
$\eta(t;\alpha_0, \beta_0)$.

First, let us see how to take $\alpha_0$ such that ($h$3) holds for $\eta(t;\alpha_0, \beta)$.
Let us put $\Xi (\alpha) = \int_1^2 \eta(t;\alpha, \beta) dt$. 
Then by definition $\Xi(\alpha)$ is  independent of $\beta$, continuous, 
monotonously increasing and $\Xi(\alpha) \to \infty$ as $\alpha \to +\infty$.
Moreover, $\Xi(0) \leq \lambda \cdot \lambda^{-2} + 1 \cdot \lambda^{-2} < 1$ 
(remember that we assume $\lambda >100$).
Thus by intermediate value theorem we get $\alpha_0$ with $\Xi(\alpha_0) =1$.
Then $\eta(t;\alpha_0, \beta)$ satisfies ($h$3).
In the similar way we can find $\beta_0$ so that ($h$5) holds for $\eta(t;\alpha_0, \beta_0)$.

It is easy to see that $H$ is a $C^{\infty}$-diffeomorphism and enjoys ($H$1), ($H$2) and ($H$5).
Let us check ($H$3) and ($H$4).
We have $H(1/2) = \lambda /2 > 7$ and
$H(\lambda ^2)$ can be estimated as follows:
\[
H(\lambda ^2) = H(2) + (\lambda^2-2)\lambda^{-2} = \lambda +1 +1 - 2/\lambda^2 < \lambda  - 2 < \lambda^2.
\]
Hence we have proved ($H$3) and ($H$4).
\end{proof}

\begin{lem}
There exists a $C^{\infty}$-diffeomorphism $\Theta$ of $\mathbb{R}^4$ that has the following properties:
\begin{enumerate}
\def\labelenumi{($\Theta$\theenumi)}
\item The support of $\Theta$ is contained in $B_l(P) \cup B_l(Q)$.
\item For $X = (x, y, z, w) \in B_s(P), \Theta(x,y,z,w) = (x,-z,y,w)$. In particular, $P$ is a fixed point of $\Theta$.
\item $\Theta$ fixes every point in $x$-axis. More presicely, for $X = (x, 0, 0, 0),  \Theta(X) = X$.
\item For any $X \in B_l(P)$,  $\Theta$ preserves the $d$-distance between $P$ and $X$.
More precisely, $d(P, \Theta(X) ) =d(P, X)$.
\item In $B_l(P)$, $\Theta$ preserves the $yz$-plane. More presicely, 
for $X=(0, y, z, 0) \in B_l(P)$, the $x$-coordinate and $w$-coordinate of $\Theta(X)$ are $0$.
\item For $(x, y+10, z, w) \in B_s(Q), \Theta(x,y+10,z,w) = (-y, x+10, -w, z)$. In particular, $Q$ is a fixed point of $\Theta$.
\item For any $X \in B_l(Q)$, $\Theta$ preserves the $d$-distance between $Q$ and $X$.
\item In $B_l(Q)$, $\Theta$ preserves the $xy$-plane.
More presicely, for $X=(x, y, 0, 0) \in B_l(Q)$, the $z$-coordinate and $w$-coordinate of $\Theta(X)$ are $0$.
\item In $B_l(Q)$, $\Theta$ preserves the plane that passes $Q$ and parallel to $zw$-plane. 
More presicely, for $X=(0, 10, z, w) \in B_l(Q)$, the $x$-coordinate of $\Theta(X)$ is $0$ and the $y$-coordinate  of $\Theta(X)$ is $10$.
\end{enumerate}
\end{lem}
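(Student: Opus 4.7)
The plan is to realize $\Theta$ as a composition of two compactly supported rotations, one centered at $P$ and one centered at $Q$. Since the two fixed points are separated by $10$ units in the $y$-direction, the balls $B_l(P)$ and $B_l(Q)$ are disjoint as soon as $l<5$, which I tacitly assume throughout. Thus it suffices to construct, independently near each fixed point, a diffeomorphism that equals a prescribed orthogonal transformation on $B_s(\,\cdot\,)$ and equals the identity outside $B_l(\,\cdot\,)$. The two local pieces then combine, by composition with identity extensions outside their supports, into the single map $\Theta$.

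The cleanest implementation is via the time-$1$ map of a smooth radial vector field. Using Lemma \ref{cptsupp}, take the smooth cutoff $\chi(r):=(\pi/2)\cdot\rho[0,s,-1,l](r)$, so that $\chi\equiv\pi/2$ on $[0,s]$ and $\chi\equiv 0$ on $[l,\infty)$. Let $J_P$ be the $4\times 4$ antisymmetric matrix generating the $\pi/2$ rotation in the $yz$-plane for which $\exp((\pi/2)J_P)(x,y,z,w)=(x,-z,y,w)$, and let $J_Q$ be the antisymmetric matrix generating simultaneous $\pi/2$ rotations in the $xy$- and $zw$-planes with $\exp((\pi/2)J_Q)(x,y,z,w)=(-y,x,-w,z)$. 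Define
\begin{equation*}
V_P(X):=\chi(|X-P|)\,J_P(X-P), \qquad V_Q(X):=\chi(|X-Q|)\,J_Q(X-Q),
\end{equation*}
which are smooth vector fields supported in $B_l(P)$ and $B_l(Q)$ respectively. Set $\Theta:=\Theta_Q\circ\Theta_P$, where $\Theta_P,\Theta_Q$ denote their time-$1$ maps.

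The key observation that justifies everything at once is that the antisymmetry of $J_P$ forces the flow of $V_P$ to preserve every Euclidean sphere centered at $P$; on such a sphere of radius $r$ the coefficient $\chi(|X-P|)$ is the constant $\chi(r)$, so the restricted ODE is the linear equation $\dot X=\chi(r)J_P(X-P)$, whose time-$1$ map is the rotation by angle $\chi(r)$ around $P$, equal to $\pi/2$ for $r\le s$ and to the identity for $r\ge l$. The same statement holds for $\Theta_Q$. From this, properties ($\Theta$1), ($\Theta$2), ($\Theta$4), ($\Theta$6), ($\Theta$7) are immediate; ($\Theta$5), ($\Theta$8), ($\Theta$9) follow because the relevant planes are invariant subspaces for $J_P$ respectively $J_Q$, hence for every $\exp(\theta J)$; and ($\Theta$3) follows from two facts: $J_P$ annihilates the $x$-axis so $V_P$ vanishes along it, while $B_l(Q)$ does not meet the $x$-axis since every point $(x,0,0,0)$ has Euclidean distance at least $10>l$ from $Q$. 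I do not anticipate a genuine obstacle: the only technical point is that $\Theta_P$ and $\Theta_Q$ are global smooth diffeomorphisms, which is immediate from the standard theory of flows of smooth compactly supported vector fields.
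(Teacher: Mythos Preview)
Your construction is correct and shares the paper's underlying idea---build $\Theta$ near $P$ and near $Q$ separately as planar rotations with a damped angle---but the implementation is genuinely different. The paper writes $\Theta$ down as an explicit formula rather than a flow: near $P$ it sets $\Theta(x,y,z,w)=(x,\,R[\omega_1(X)](y,z),\,w)$ where the rotation angle $\omega_1(X)=(\pi/2)\rho_1(x)\rho_1(y)\rho_1(z)\rho_1(w)$ is a \emph{product of one-variable bump functions in the coordinates} (and analogously near $Q$, rotating both the $(x,y)$ and the $(z,w)$ pairs). Your route through the time-$1$ map of $\chi(|X-P|)J_P(X-P)$ has two concrete advantages: the diffeomorphism property is free from flow theory (the paper simply asserts it without argument), and ($\Theta$4), ($\Theta$7) drop out instantly from the antisymmetry of $J_P$, $J_Q$. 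The paper's product-of-bumps angle, on the other hand, is adapted to the $\ell^\infty$ geometry of the cubes $B_s$, $B_l$.

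One small point to watch: since $B_s(P)=\mathcal{B}(P,s)$ is an $\ell^\infty$-cube, its corners sit at Euclidean distance $2s$ from $P$, so your purely radial cutoff must be constant on $[0,2s]$ rather than $[0,s]$ for ($\Theta$2) to hold on all of $B_s(P)$; combined with the support condition this needs $2s<l$. If the actual parameters violate that, a painless fix keeps your whole argument intact: replace $\chi(|X-P|)$ by a weight that is radial only in the plane being rotated, e.g.\ $(\pi/2)\rho_1(x)\rho_1(w)\,\tilde\rho(y^2+z^2)$ near $P$. This is still constant along the $J_P$-flow lines (the sole property you exploit), so the time-$1$ map is still $X\mapsto P+\exp(\text{angle}\cdot J_P)(X-P)$, and now only $s\sqrt{2}<l$ is required.
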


 \begin{proof}
 First we define three functions $\rho_4(t), \omega_1(X), \omega_2(X)$ as follows: 
 \begin{eqnarray*}
 \rho_1(t) &= \rho[-1/300, 1/300, -1/200, 1/200](t), \\
 \omega_1(x, y, z, w) &= (\pi/2)\rho_1(x)  \rho_1 (y) \rho_1(z)\rho_1(w), \\
 \omega_2(x, y, z, w) &= (\pi/2)\rho_1(x)  \rho_1 (y-10) \rho_1(z)\rho_1(w).
 \end{eqnarray*}
We also define a map $R[\alpha]:\mathbb{R}^2 \to \mathbb{R}^2$ 
to be the rotation of angle $\alpha$, more precisely, for  $(x, y) \in \mathbb{R}^2$
we put
\[
R[\alpha](x, y) :=( \cos(\alpha)x -\sin(\alpha)y, \, \sin(\alpha)x +\cos(\alpha)y ).
\]
Then we define $\Theta$ as follows: 
\begin{itemize}
\item For $X = (x, y, z, w) \in B_l(P)$,  $\Theta (X) = (x, R[\omega_1(X)](y,z) , w)$.
\item For $X = (x, y+10, z, w) \in B_l(Q)$,  
\[\Theta (X) = (R[\omega_2(X)](x,y), R[\omega_2(X)](z,w) ) + Q.\]
\item Otherwise, $\Theta(X) = X$.  
\end{itemize}
Then $\Theta$ is a $C^{\infty}$-diffeomorphism and has all the properties ($\Theta$1)--($\Theta$9).
\end{proof}

\begin{lem}
There is a $C^{\infty}$-diffeomorphism $\Psi$ of $\mathbb{R}^4$ that has the following properties:
\begin{enumerate}
\def\labelenumi{($\Psi$\theenumi)}
\item The support of $\Psi$ is contained in $[-\lambda^3-1, \lambda^3+1]^4$.
\item For $(x, y, z, w) \in [-\lambda^3, \lambda^3]^4$, $\Psi(x, y, z, w) = (F(x), G(y), H(z), H(w))$.  
\end{enumerate}
\end{lem}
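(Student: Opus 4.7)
The plan is to realize $\Psi$ as a composition of four diffeomorphisms of $\mathbb{R}^4$, each acting by one of $F$, $G$, $H$, $H$ on a single coordinate and cut off by a bump function in the remaining three coordinates. The naive product map $(x,y,z,w)\mapsto(F(x),G(y),H(z),H(w))$ is not compactly supported on $\mathbb{R}^4$, since it moves any $(x,y,z,w)$ with $F(x)\neq x$ regardless of the other three coordinates; the cut-off repairs this while leaving the map untouched on $[-\lambda^3,\lambda^3]^4$.

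Concretely, using Lemma \ref{cptsupp} I fix $\chi(t):=\rho[-\lambda^3,\lambda^3,-\lambda^3-1,\lambda^3+1](t)$, so that $\chi\equiv 1$ on $[-\lambda^3,\lambda^3]$ and $\chi\equiv 0$ off $[-\lambda^3-1,\lambda^3+1]$, and then set
\[
\tilde F(x,y,z,w):=\bigl(x+\chi(y)\chi(z)\chi(w)(F(x)-x),\,y,\,z,\,w\bigr),
\]
and analogously $\tilde G,\tilde H_1,\tilde H_2$ acting on the $2$nd, $3$rd, $4$th coordinate via $G$, $H$, $H$ respectively, with the bump formed from the other three coordinates. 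Each of these is a $C^\infty$-diffeomorphism of $\mathbb{R}^4$: fixing the three inert coordinates and writing $t:=\chi(\cdots)\in[0,1]$, the moved coordinate is sent by $x\mapsto(1-t)x+tF(x)$, whose derivative $(1-t)+tF'(x)$ is strictly positive since $F'>0$. Moreover, each is the identity outside $[-\lambda^3-1,\lambda^3+1]^4$: for $\tilde F$ to move $X$ one needs both $\chi(y)\chi(z)\chi(w)\neq 0$ and $F(x)\neq x$, forcing $|y|,|z|,|w|<\lambda^3+1$ and $|x|<\lambda^3-1$.

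Setting $\Psi:=\tilde H_2\circ\tilde H_1\circ\tilde G\circ\tilde F$, property ($\Psi$1) is then immediate: each factor, being the identity off $[-\lambda^3-1,\lambda^3+1]^4$, necessarily maps that complement to itself, so the composition is the identity there as well. For ($\Psi$2), on $[-\lambda^3,\lambda^3]^4$ every bump evaluation equals $1$, so $\tilde F$ sends $(x,y,z,w)$ to $(F(x),y,z,w)$; iterating the same argument through $\tilde G,\tilde H_1,\tilde H_2$ yields $\Psi(x,y,z,w)=(F(x),G(y),H(z),H(w))$, provided each intermediate image stays inside $[-\lambda^3,\lambda^3]^4$ so that the bumps continue to evaluate to $1$.

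The only step actually requiring verification — and hence the main point to check — is therefore the invariance $F([-\lambda^3,\lambda^3]),\,G([-\lambda^3,\lambda^3]),\,H([-\lambda^3,\lambda^3])\subset[-\lambda^3,\lambda^3]$. For $F$ and $G$ this follows from their construction as time-$1$ maps of vector fields that vanish for $|\cdot|\geq\lambda^3-1$ and point toward $0$ in the interior, so no orbit can leave $[-\lambda^3,\lambda^3]$. For $H$, condition ($h$5) gives $H(\lambda^3-1)=\lambda^3-1$ and ($h$6) gives $H(z)=z$ for $z\geq\lambda^3-1$, so monotonicity together with the symmetry $H(-z)=-H(z)$ from ($H$1) forces $H([-\lambda^3,\lambda^3])\subset[-\lambda^3,\lambda^3]$. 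With this invariance in hand, the construction above produces a $\Psi$ satisfying both ($\Psi$1) and ($\Psi$2).
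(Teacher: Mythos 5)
Your proof is correct, but it takes a genuinely different (and arguably cleaner) route than the paper's. The paper defines $\Psi$ in one shot by cutting off all four coordinates simultaneously, $\Psi_i(X) = R_i(X)F_i(x_i) + (1-R_i(X))x_i$ with $R_i$ a product of three bump factors, and then proves it is a diffeomorphism via a case analysis over the $16$ regions $I[(\sigma_i)]$ obtained by sorting each coordinate into $I_0=[-\lambda^3,\lambda^3]$ or its complement, showing $\Psi$ restricts to a slicewise convex combination of diffeomorphisms of $I_0$ on each region. You instead factor $\Psi = \tilde H_2\circ\tilde H_1\circ\tilde G\circ\tilde F$ as a composition of four one-coordinate shears, each of which is immediately seen to be a global diffeomorphism because its Jacobian is triangular with positive diagonal entry $(1-t)+tF'(x)$ and it is the identity near infinity. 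The cost of the factored construction is the extra check that intermediate images stay inside $[-\lambda^3,\lambda^3]^4$, which you correctly reduce to the forward-invariance of $[-\lambda^3,\lambda^3]$ under $F$, $G$, $H$; note that the paper's argument implicitly relies on the same fact (it needs each $F_k$ to be a diffeomorphism of $I_0$ onto itself for the convex-combination step to make sense). One small imprecision in your invariance argument: the vector field generating $G$ does \emph{not} point toward $0$ in the interior --- $0$ is a repelling equilibrium and the flow pushes points toward $\pm 10$. The invariance follows more simply from the fact that the generating fields for $F$ and $G$ vanish for $|\cdot|\geq\lambda^3-1$, so $\pm(\lambda^3-1)$ are equilibria and the one-dimensional flow, preserving order, cannot cross them; this together with the identity outside $[-\lambda^3+1,\lambda^3-1]$ gives $F(I_0),G(I_0)\subset I_0$. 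This slip does not affect the validity of your conclusion.
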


\begin{proof}
We define some functions $\rho_1$ on $\mathbb{R}$ and $R_i \,\,(i =1,\ldots,4)$ on $\mathbb{R}^4$ as follows:
\begin{eqnarray*}
\rho_1(t) := \rho[-\lambda^3, \lambda^3, -\lambda^3-1, \lambda^3+ 1](t),    \\
R_1(X)  := \rho_1(x_2)\rho_1(x_3)\rho_1(x_4), \quad
R_2(X)  := \rho_1(x_1)\rho_1(x_3)\rho_1(x_4),   \\
R_3(X)  := \rho_1(x_1)\rho_1(x_2)\rho_1(x_4),  \quad
R_4(X)  := \rho_1(x_1)\rho_1(x_2)\rho_1(x_3). 
\end{eqnarray*}
where $X=(x_1, x_2, x_3, x_4)$. Then we define a map $\Psi : \mathbb{R}^4 \to \mathbb{R}^4$ by
\[
\Psi_i(X) :=  R_i(X)F_i(x_i) + (1-R_i(X))x_i,  \quad \mbox{for} \,\, i=1, 2, 3, 4,
\]
where $\Psi_i(X)$ denotes the $i$-th coordinate of $\Psi(X)$
and we put $F_1 :=F$, $F_2=G$ and $F_3=F_4=H$ as the matter of convenience.

It is easy to see that $\Psi$ is a $C^{\infty}$-map and satisfies ($\Psi$1) and ($\Psi$2).
The perplexing part is to confirm that $\Psi$ is a diffeomorphism.
So let us check it.

We put $I_0 := [ -\lambda^3, \lambda^3]$, $I_1 := \mathbb{R} \setminus I_0$,
$S := \{(\sigma_i)_{i=1,\ldots,4} \mid \sigma_i =0,1 \}$ and for any $(\sigma_i) \in S$
we put $I[(\sigma_i)] : =  \prod_{i=1,\ldots,4}I_{\sigma_i}$.
Then divide $\mathbb{R}^4$ into $16$ subsets as follows:
\[
\mathbb{R}^4 =  \coprod_{(\sigma_i) \in S} I[(\sigma_i)].
\]
We claim that every restriction of $\Psi$ to $I[(\sigma_i)]$ is a diffeomorphism.
Fix $(\sigma_i) \in S$ and put $j := \sum \sigma_i$. We define
\[
P[(\sigma_i)] :=\{ (\sigma_ix_i) \in \mathbb{R}^4 \mid x_i \in I_1 \},
\]
and for $Y = (\sigma_iy_i) \in P[(\sigma_i)]$, 
\[
S([(\sigma_i)], Y) := \{ Y + ((1-\sigma_i)z_i) \mid z_i \in I_0 \}.
\]
Then we have the following decomposition:
\[
I[(\sigma_i)] = \coprod_{Y \in P[(\sigma_i)]} S([(\sigma_i)], Y). 
\]
Intuitively speaking, we divided $I[(\sigma_i)]$ into 
$(4-j)$-dimensional cubes that are parametrized by $j$-dimensional parameters.

First, let us investigate the behavior of the restriction of $\Psi$ to $I[(\sigma_i)]$.
We prove $\Psi_k|_{I[(\sigma_i)]}(X) = x_k$ for $k$ that satisfies $\sigma_k =1$. 
One important observation is that $F_k$ are the identity map on $I_1$
by ($F$1), ($G$1) and ($H$2).
So, if $\sigma_k = 1$, then $x_k \in I_1$. 
This means $F_k(x_k) =x_k$ and accordingly 
we have $\Psi_k(X) =x_k$ independent of the value of $R_k(X)$.

Second, we investigate the behavior of the restriction of $\Psi$ to $S([(\sigma_i)], Y)$.
We show $\Psi_k|_{S([(\sigma_i)], Y)}(X) = \tilde{F}_k(x_k)$ for $k$ that satisfies $\sigma_k =0$,
where $\tilde{F}_k(x_i)$ is some diffeomorphism of $I_0$. 
The key point is that the change of $X$ in $S([(\sigma_i)], Y)$
never varies $R_k(X)$. We can see this as follows: For $l$ that satisfies $\sigma_l =1$,
the $l$-th coordinate of $X$ is fixed, and for $l$ that satisfies $\sigma_l =0$,
$l$-th coordinate of $X$ is in $I_0$, hence the change of $X$ in $S([(\sigma_i)], Y)$ give
no effect on $R_k(X)$.
Now the formula $\Psi_k|_{S([(\sigma_i)], Y)}(X) = R_k(X)F_k(x_k) + (1-R_k(X))x_k$
tells us that the RHS gives a diffeomorphism of $I_0$, since it is a convex combination of 
diffeomorphisms of $I_0$.
In particular, $\Psi|_{S([(\sigma_i)], Y)}$ is a diffeomorphism of $S([(\sigma_i)], Y)$.

Thus we know $\Psi|_{I[(\sigma_i)]}$ is a diffeomorphism when it is restricted to the cube $S([(\sigma_i)], Y)$,
behaves as the identity map to $P[(\sigma_i)]$ direction and these 
two facts tells us $\Psi|_{I[(\sigma_i)]}$ is a diffeomorphism of $I[(\sigma_i)]$.
\end{proof}

Now, let us give the proof of proposition \ref{Phi}.
\begin{proof}[Proof of proposition \ref{Phi}.]
We put $\Phi := \Theta \circ \Psi$.
From the properties of $F$, $G$, $H$, $\Theta$ and $\Psi$, we can see $\Phi$
satisfies ($\Phi$1)--($\Phi$10). Indeed,
\begin{itemize}
\item ($\Phi$1) follows from ($\Psi$1) and ($\Theta$1).  
\item ($\Phi$2) follows from ($F$5), ($G$2), ($H$3), ($H$4) and ($\Theta$1).  
\item ($\Phi$3) follows from ($F$4), ($G$7), ($G$8), ($H$5), ($\Theta$2) and ($\Theta$6).  
\item ($\Phi$4) follows from ($F$2), ($G$3), ($H$5) and ($\Theta$2).  
\item ($\Phi$5) follows from ($F$2), ($\Theta$1)  and ($\Theta$3).  
\item ($\Phi$6) follows from ($F$2), ($G$3), ($G$5), ($\Theta$1), ($\Theta$4) and ($\Theta$5).  
\item ($\Phi$7) follows from ($F$4), ($G$8), ($H$5)  and ($\Theta$6).  
\item ($\Phi$8) follows from ($F$2), ($G$4), ($G$5), ($\Theta$1), ($\Theta$7) and ($\Theta$8).  
\item ($\Phi$9) follows from ($H$5), ($\Theta$1), ($\Theta$7), and ($\Theta$9).  
\end{itemize}
Let us check ($\Phi$10). We see the action of $\Lambda^3(d\Psi)(X)$ (we put $X = (x,y,z,w)$)
to the orthonormal basis  $ \langle e_1 \wedge e_2 \wedge e_3,  e_1 \wedge e_2 \wedge e_4, e_1 \wedge e_3 \wedge e_4, e_2 \wedge e_3 \wedge e_4 \rangle$,
where $\langle e_i\rangle$ denotes the standard orthonormal basis of $T\mathbb{R}^4|_X$.
\begin{eqnarray*}
\Lambda^3(d\Psi)(X)(e_1 \wedge e_2 \wedge e_3) &= F'(x)G'(y)H'(z)   e_1 \wedge e_2 \wedge e_3      \\
\Lambda^3(d\Psi)(X)(e_1 \wedge e_2 \wedge e_4) &= F'(x)G'(y)H'(w)  e_1 \wedge e_2 \wedge e_4      \\
\Lambda^3(d\Psi)(X)(e_1 \wedge e_3 \wedge e_4) &= F'(x)H'(z)H'(w)  e_1 \wedge e_3 \wedge e_4      \\
\Lambda^3(d\Psi)(X)(e_2 \wedge e_3 \wedge e_4) &= G'(y)H'(z)H'(w)  e_2 \wedge e_3 \wedge e_4
\end{eqnarray*}
By this calculation, we know $m \left(\Lambda^3(d\Psi)(X)\right)$ is given 
as the minimum of the lengths of these four vectors. 
Let us fix a real number $c>0$ such that
\[
 \min_{(x,y) \in [-\lambda^2, \lambda^2]^2} \left\{ |F'(x)G'(y)|,  |F'(x)|, |G'(y)| \right\} > c. 
\]
We can take such $c$ for ($F$3) and ($G$6).
Note that $c$ can be choosen independent of $\lambda$.
Since for any $(x, y, z, w) \in C$, $H'(z)=H'(w)= \lambda$,
we have proved the lengths of these four vectors are bounded below by $c \lambda$
and $m \left(\Lambda^3(d\Psi)(X)\right) > c \lambda$ for $X \in C$.

Let us put $c_{\Theta} := \min_{x \in \mathbb{R}^4 } m(\Lambda^3(d\Theta)(x))$.
Since $\Theta$ is a diffeomorphism and have compact support, $c_{\Theta}$ is a positive number.
Now, for any $X \in C$, the inequality
\[
m(\Lambda^3(d\Phi)(X)) \geq  \min_{x \in \mathbb{R}^4 } m(\Lambda^3(d\Theta)(x)) \cdot 
m \left(\Lambda^3(d\Psi)(X)\right)  = c_\Theta c\lambda
\]
holds and this means we have ($\Phi$10) for $c_{\Phi} =c_\Theta c$.

So the proof is completed.
\end{proof}

\subsection{Construction of $\Upsilon$.}
\begin{prop}
There exists a $C^{\infty}$-diffeomorphism $\Upsilon$ of $\mathbb{R}^4$ that has the following properties:
\begin{enumerate}
\def\labelenumi{($\Upsilon$\theenumi)}
\item The support of $\Upsilon$ is contained in $D$.
\item For $X \in \mathcal{B}(C_1, 0.2),  \Upsilon(X) = X +(10, -10, -5, 0)$.
\end{enumerate}
\end{prop}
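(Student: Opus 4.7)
The plan is to realize $\Upsilon$ as a composition $\Upsilon := \Upsilon_3 \circ \Upsilon_2 \circ \Upsilon_1$, where each $\Upsilon_i$ is supported inside the $i$-th cylinder building up $D$ and carries $\mathcal{B}(C_i, 0.2)$ to $\mathcal{B}(C_{i+1}, 0.2)$ by the translation $v_i := C_{i+1} - C_i$. The intended configuration (suggested by Figure \ref{fig:Upsilon}) places the pivot points $C_1, C_2, C_3, C_4$ so that $C_4 - C_1 = (10, -10, -5, 0)$ and each partial displacement $v_i$ is parallel to a single standard basis vector, namely $v_1 = (0,0,-5,0)$, $v_2 = (0,-10,0,0)$, and $v_3 = (10,0,0,0)$, whose sum is precisely the translation vector prescribed in $(\Upsilon 2)$.

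For each $i \in \{1,2,3\}$, I will take $\Upsilon_i$ to be the time-one map of the smooth vector field
\[
V_i(X) := \psi_i(X)\, v_i,
\]
where $\psi_i : \mathbb{R}^4 \to [0,1]$ is a bump function built as a tensor product of four one-variable bumps provided by Lemma \ref{cptsupp}. I will choose $\psi_i$ so that $\psi_i \equiv 1$ on the $\ell^{\infty}$-tube of radius $0.3$ around a short longitudinal extension of the segment $\overline{C_i C_{i+1}}$ (long enough that, for every $X \in \mathcal{B}(C_i, 0.2)$, the entire line segment $\{X + tv_i : t \in [0,1]\}$ sits inside the tube), and $\psi_i \equiv 0$ outside $\mathcal{C}(C_i, C_{i+1}, r_i)$, where $r_1=1$, $r_2=0.7$, $r_3=0.4$. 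Since $v_i$ is a constant vector and $\psi_i$ is identically $1$ along the whole trajectory of any starting point in $\mathcal{B}(C_i, 0.2)$, standard ODE theory shows that the time-one flow of $V_i$ restricted to $\mathcal{B}(C_i, 0.2)$ coincides with the translation $X \mapsto X + v_i$, and hence sends $\mathcal{B}(C_i, 0.2)$ onto $\mathcal{B}(C_{i+1}, 0.2)$.

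Composing the three pieces then yields both conclusions at once. Property $(\Upsilon 1)$ holds because $\mathrm{supp}(\Upsilon) \subset \bigcup_i \mathrm{supp}(\Upsilon_i) \subset \bigcup_i \mathcal{C}(C_i, C_{i+1}, r_i) = D$. For $(\Upsilon 2)$, any $X \in \mathcal{B}(C_1, 0.2)$ is moved by $\Upsilon_1$ to $X + v_1 \in \mathcal{B}(C_2, 0.2)$, then by $\Upsilon_2$ to $X + v_1 + v_2 \in \mathcal{B}(C_3, 0.2)$, and finally by $\Upsilon_3$ to $X + v_1 + v_2 + v_3 = X + (10,-10,-5,0)$, as required. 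Smoothness and the diffeomorphism property of each $\Upsilon_i$ are automatic from the fact that $V_i$ is a compactly supported smooth vector field.

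The only technical care is bookkeeping on the radii of the inner and outer tubes: the inner region where $\psi_i \equiv 1$ must be thick enough to contain the moving cube $\mathcal{B}(C_i, 0.2)$ throughout the entire flow, while its closure must still sit strictly inside $\mathcal{C}(C_i, C_{i+1}, r_i)$ so that $\psi_i$ tapers to zero before the cylinder boundary. The gap $0.3 < 0.4 = \min_i r_i$ (with $0.3 > 0.2$) gives room to do this simultaneously in all three cylinders, so the construction presents no real obstacle beyond a routine choice of bump profiles.
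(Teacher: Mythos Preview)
Your approach is correct and follows the same three-piece decomposition $\Upsilon = \Upsilon_3 \circ \Upsilon_2 \circ \Upsilon_1$ as the paper, with each $\Upsilon_i$ supported in one of the tubes $\mathcal{C}(C_i, C_{i+1}, r_i)$ and acting as the translation $X \mapsto X + (C_{i+1}-C_i)$ on a small cube around $C_i$. The only difference is in how each building block is realized: the paper constructs it explicitly (Lemma~\ref{chi}) as a one-dimensional diffeomorphism obtained by integrating a positive profile, extended by bump functions in the transverse directions and after a coordinate change aligning $C_{i+1}-C_i$ with an axis, whereas you obtain the same object as the time-one map of a compactly supported vector field---both devices yield the required compactly supported $C^\infty$ translation.
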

To construct $\Upsilon$, we need the following auxiliary diffeomorphims.
\begin{lem}\label{chi}
Given two points $Y, Z \in \mathbb{R}^4$ and three real numbers $a>b>c>0$ with $d(Y,Z) >2a$,
there exists a diffeomorphism $\chi[Y,Z, a,b,c] (X) = \chi(X)$ on $\mathbb{R}^4$ which has the following properties:
\begin{enumerate}
\def\labelenumi{($\chi$\theenumi)}
\item The support of  $\chi(X)$ is contained in $\mathcal{C}(Y, Z, a)$.
\item For $X \in \mathcal{B}(Y, c),  \Upsilon(X) = X +Z -Y$.
\end{enumerate}
\end{lem}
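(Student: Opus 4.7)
The plan is to construct $\chi$ as the time-$1$ map of a suitable compactly supported vector field $V$ on $\mathbb{R}^4$ pointing in the constant direction $Z-Y$, the role of the three radii $a>b>c$ being: $a$ cuts off the support, $c$ is the size of the inner region on which pure translation is required, and $b$ is an intermediate buffer that guarantees trajectories starting in $\mathcal{B}(Y,c)$ never leave the region where $V$ equals $Z-Y$.

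First I construct a smooth bump function $\rho:\mathbb{R}^4\to[0,1]$ with $\rho\equiv 1$ on $\mathcal{C}(Y,Z,b)$ and $\rho\equiv 0$ off $\mathcal{C}(Y,Z,a)$. One way: take the indicator of the intermediate tube $\mathcal{C}(Y,Z,(a+b)/2)$ and convolve with a non-negative $C^{\infty}$-bump of total mass one whose $\ell^{\infty}$-support lies in the cube of radius $(a-b)/2$ (such a bump is obtained by tensoring four copies of the function from Lemma \ref{cptsupp}). Because the function $X\mapsto d_1(X,\overline{YZ})$ is $1$-Lipschitz with respect to $d_1$, the resulting convolution is identically $1$ on $\mathcal{C}(Y,Z,b)$ and vanishes outside $\mathcal{C}(Y,Z,a)$.

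Next I set $V(X):=\rho(X)\,(Z-Y)$ and let $\chi$ be the time-$1$ map of the flow of $V$. Since $V$ is $C^{\infty}$ and compactly supported in $\mathcal{C}(Y,Z,a)$, the flow is globally defined and $\chi$ is a $C^{\infty}$-diffeomorphism equal to the identity off $\mathcal{C}(Y,Z,a)$; this gives ($\chi$1). For ($\chi$2), fix $X\in\mathcal{B}(Y,c)$ and consider the candidate curve $X_t:=X+t(Z-Y)$, $t\in[0,1]$. Then
\[
d_1\!\bigl(X_t,\,tZ+(1-t)Y\bigr)=d_1(X,Y)\le c<b,
\]
so $X_t\in\mathcal{C}(Y,Z,c)\subset\mathcal{C}(Y,Z,b)$ throughout, where $\rho\equiv1$. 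Consequently $\dot X_t=Z-Y=V(X_t)$, so by uniqueness $X_t$ is the genuine trajectory, and $\chi(X)=X_1=X+Z-Y$.

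The only non-routine step is producing the smooth bump $\rho$ with the correct tube geometry — the rest is an automatic flow-box argument — so that convolution construction is what I would write out carefully, while leaving the flow verification essentially to inspection.
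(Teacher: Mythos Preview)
Your argument is correct and takes a genuinely different route from the paper. The paper first changes coordinates so that $Z-Y$ lies along the first axis, then builds $\chi$ by hand: it constructs a one-dimensional diffeomorphism $\kappa:\mathbb{R}\to\mathbb{R}$ with support in $[-a,\zeta+a]$ that equals $x\mapsto x+\zeta$ on $[-c,c]$ (as the antiderivative of an explicitly tuned positive function), and then interpolates in the three transverse directions by a convex combination, $\chi(x,y,z,w)=\bigl(R\,\kappa(x)+(1-R)x,\,y,\,z,\,w\bigr)$ with $R=\rho_1(y)\rho_1(z)\rho_1(w)$. You instead realise $\chi$ as the time-$1$ map of the compactly supported field $\rho\,(Z-Y)$, and the key verification---that trajectories starting in $\mathcal{B}(Y,c)$ stay in $\mathcal{C}(Y,Z,b)$ where $\rho\equiv 1$---is exactly the identity $X_t-(tZ+(1-t)Y)=X-Y$ you wrote down. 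Your flow method is coordinate-free and so sidesteps the paper's initial change of coordinates (which, read literally, is delicate because the $\ell^\infty$-tubes $\mathcal{C}$ are not rotation-invariant; in the paper's actual application the vectors $C_{i+1}-C_i$ are axis-parallel, so only a translation is needed). Conversely, the paper's explicit formula makes the Jacobian of $\chi$ directly readable, which would be an advantage if sharper control of $d\chi$ were required downstream. One small remark: Lemma~\ref{cptsupp} yields plateau functions rather than mollifiers of total mass one, but normalising such a plateau (or its tensor product) gives the bump you need, so this is harmless.
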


Let us see how one can construct $\Upsilon$ from $\chi$.
We define $l_n := 1.1-0.1n$,
\[
\chi_i(X) = \chi(C_i, C_{i+1}, l_{3i-2}, l_{3i-1}, l_{3i}) 
\]
for $i = 1, 2,3$ and put $\Upsilon = \chi_3 \circ \chi_2 \circ \chi_1$.
Then $\Upsilon$ satisfies ($\Upsilon$1) and ($\Upsilon$2).

\begin{proof}[Proof of lemma \ref{chi}]
By changing coordinate, we can assume $Y$ is the origin of $\mathbb{R}^4$ and $Z = (\zeta, 0, 0, 0)$
where $\zeta$ is some real number with $\zeta >2a$.
First we construct a diffeomorphism $\kappa(x)$ of $\mathbb{R}$ that satisfies the following conditions:
\begin{enumerate}
\def\labelenumi{($\kappa$\theenumi)}
\item The support of $\kappa$ is contained in $[-a, \zeta +a]$.
\item For $x \in [-c, c]$, $\kappa (x) = x +\zeta$.
\end{enumerate}
Then we define $\chi$ as follows.
We put $\rho_1(t) = \rho[-c, c, -b, b](t)$, $R(x,y,z,w) = \rho_1(y)\rho_1(z)\rho_1(w)$ and 
for $X = (x, y, z, w) \in \mathbb{R}^4$,
\[
\chi[Y,Z, a,b,c] (X) = \left( R(X)\kappa(x) + (1-R(X))x, y, z, w \right).
\]
It is not difficult to see $\chi$ satisfies the conditions required.

Let us see how to construct $\kappa(x)$.
We prepare a $C^{\infty}$-function $\eta (x)$ on $\mathbb{R}$ that satisfies the following properties:
\begin{enumerate}
\def\labelenumi{($\eta$\theenumi)}
\item $\eta(x) >0$ for all $x >0$.
\item $\eta(x) =1$ if $x < -b, -c < x < c$, or $x>\zeta+b$. 
\item $\int_{-b}^{-c}\eta(x) = \zeta+b-c$.
\item $\int_{c}^{\zeta+b}\eta(x) = b-c$.
\end{enumerate}  
Then $\kappa(x) := z + \int_{0}^{x}\eta(t)\,dt$ is a $C^{\infty}$-diffeomorphism satisfying ($\kappa$1) and ($\kappa$2).
Finally, let us see how to construct $\eta(t)$.
We fix a positive real number $e < (b-c)/2$. Let us define
\[
\rho_2(x)= \rho[-b+e, -c-e, -b, -c](x),  \rho_3(x)= \rho[c+e, \zeta +b-e, c, \zeta +b](x),
\]
and
\[
\eta(x; \alpha, \beta) = \exp(\alpha \rho_2(x) + \beta\rho_3(t)),
\]
where $\alpha, \beta$ are some real numbers. 
We show there exists $\alpha_1$, $\beta_1$ such that $\eta(x; \alpha_1, \beta_1)$
satisfies ($\eta$1)--($\eta$4).

($\eta$1) and ($\eta$2) holds for $\eta(x; \alpha, \beta)$ for all $\alpha$ and $\beta$. 
Thus let us consider ($\eta$3) and ($\eta$4).
For $x \in [-b, -c]$, $\eta(x; \alpha, \beta)$ is equal to  $\exp(\alpha \rho_2(x))$. 
Let us put $J(\alpha) := \int_{-b}^{-c}\eta(x; \alpha, \beta)$. Then one can check that
$J(\alpha) \to 0$ as $\alpha \to -\infty$, $J(\alpha) \to +\infty$ as $\alpha \to +\infty$
and $J(\alpha)$ is continuous and monotonously increasing.
So, by intermediate value theorem, there exists $\alpha_1$ such that $J(\alpha_1) = \zeta+b-c$.
In the similar way, one can find $\beta_1$ such that $\int_{c}^{\zeta+b}\eta(x; \alpha_1, \beta_1) = b-c.$
\end{proof}

\subsection{Proof of proposition \ref{local}}
Finally, we finish the proof of proposition \ref{local}.
\begin{proof}[Proof of proposition \ref{local}]
Let us put $c_{\Upsilon} := \min_{x \in \mathbb{R}^4 } m(\Lambda^3(d\Upsilon)(x))$.
Since $\Upsilon$ is a diffeomorphism and has compact support, $c_{\Upsilon} >0$.
We take $\lambda_0>0$ so that $ K:= c_{\Phi}c_{\Upsilon}\lambda_0  >1$ holds.
We put $\Omega := \Upsilon \circ \Phi_{\lambda_0}$
and show $\Omega$, $A$, $B$, $P$, $Q$ and $K$ satisfies ($w$1)--($w$7).
Indeed, 
\begin{itemize}
\item ($w$1) follows from ($\Phi$1) and ($\Upsilon$1).
\item ($w$2) follows from ($\Phi$2) and ($\Upsilon$1).
\item ($w$3) follows from ($\Phi$3), ($\Phi$4), ($\Phi$7) and ($\Upsilon$1).
\item ($w$4) follows from ($\Phi$5), ($\Phi$6), ($\Phi$8), ($\Phi$9), ($\Upsilon$1) and ($\Upsilon$2).
\item ($w$5) follows from ($\Phi$4) and ($\Upsilon$1).
\item ($w$6) follows from ($\Phi$7) and ($\Upsilon$1).
\item ($w$7) follows from ($\Phi$10) and the definition of $\lambda_0$.
\end{itemize}
So the proof is completed.
\end{proof}

\end{document}